\documentclass[reqno,12pt,reqno]{amsart}
\usepackage{amsfonts, amsthm, amsmath, amssymb, enumerate, verbatim, bm, cite}
\usepackage{hyperref}
\usepackage[margin=1.5in]{geometry}
\usepackage[utf8]{inputenc}
\hypersetup{colorlinks=false}

\RequirePackage{mathrsfs} \let\mathcal\mathscr

\numberwithin{equation}{section}

\newtheorem{theorem}{Theorem}[section]
\newtheorem{lemma}[theorem]{Lemma}
\newtheorem{proposition}[theorem]{Proposition}

\theoremstyle{definition}
\newtheorem*{ack}{Acknowledgements}

\newtheorem{definition}[theorem]{Definition}

\renewcommand{\rho}{\varrho}

\newcommand{\ZZ}{\mathbb{Z}}

\newcommand{\RR}{\mathbb{R}}

\renewcommand{\leq}{\leqslant}

\renewcommand{\geq}{\geqslant}

\newcommand{\bs}{\boldsymbol}

\newcommand{\x}{\mathbf{x}}
\newcommand{\y}{\mathbf{y}}

\renewcommand{\phi}{\varphi}

\newcommand{\ve}{\varepsilon}

\DeclareMathOperator{\meas}{meas}

\DeclareMathOperator{\sinc}{sinc}


\renewcommand\Im{\operatorname{Im}}
\renewcommand\Re{\operatorname{Re}}

\setcounter{tocdepth}{1}

\begin{document}
\date{\today}
\title{Some new results in quantitative Diophantine approximation}

\author{Anish Ghosh and V. Vinay Kumaraswamy}
\address{School of Mathematics, Tata Institute of Fundamental Research, Homi Bhabha Road, Colaba, Mumbai, India 400005}

\thanks{A. G. gratefully acknowledges support from a MATRICS grant from the Science and
Engineering Research Board, a grant from the Infosys foundation and a Department of
Science and Technology, Government of India, Swarnajayanti fellowship. The authors were supported by the Department of Atomic Energy, Government of India,
under project no.12-R\&D-TFR-5.01-0500}
\email{ghosh@math.tifr.res.in}
\email{vinay.visw@gmail.com} 

\subjclass[2010]{11D75, 11E20, 11E76, 11L07, 11P32}
\keywords{Diophantine approximation by primes, inhomogeneous quadratic forms, ternary forms}

\begin{abstract}
In this article we establish two new results on quantitative Diophantine approximation for one-parameter families of diagonal ternary indefinite forms. In the first result, we consider quadratic forms taking values at prime points. In the second, we examine inhomogeneous forms of arbitrary degree taking values at integer points. 
\end{abstract}

\maketitle

\tableofcontents


\section{Introduction}
Let $Q(x_1,\ldots,x_s)$ be a real, nondegenerate, indefinite quadratic form in $s \geq 3$ variables, that is not a multiple of a quadratic form with rational coefficients. A famous theorem of Margulis asserts that, under these conditions,  $Q(\ZZ^s)$ is a dense subset of $\RR$. This settled an old conjecture of Oppenheim. Thus, given a real number $\ve > 0$, there exists $\x \in \ZZ^s$ such that 
\begin{equation*}
0 < |Q(\x)| < \ve.
\end{equation*} 
We may reformulate Margulis's theorem in the following manner: 
there is a sequence $\delta(N)\to 0$ (depending on $Q$) such that for all sufficiently large $N$,
\begin{equation*}
\min_{0 < |\x| < N}|Q(\x)|\leq \delta(N).
\end{equation*}
 It is then natural to seek good upper bounds for $\delta(N)$. Note that while Margulis's theorem establishes the solubility of the above inequality, it gives no upper bound on the size of $\x$ in terms of $\ve$. 

For diagonal forms in at least $5$ variables Birch and Davenport~\cite{BD58} showed using the circle method that we may take $\delta(N) \ll N^{-1/2+\ve}$. For general quadratic forms in at least $5$ variables, upper bounds for $\delta(N)$ have been obtained by Buterus, G\"{o}tze, Hille and Margulis~\cite{GotzeMargulis2010}. Using sieve methods, Iwaniec~\cite{Iwaniec77} showed that $\delta(N) \ll N^{-\ve},$ for some small $\ve >0$, for certain diagonal quadratic forms in $4$ variables; a result which has unfortunately been overlooked in subsequent mentions of Oppenheim's conjecture in the mathematical literature. 

In an important recent development, Lindenstrauss and Margulis~\cite{LM14} have shown that there exists $\kappa > 0$ such that $\delta(N) \ll (\log N)^{-\kappa},$ for an explicit set of ternary quadratic forms satisfying a Diophantine condition, and this is currently the state of the art for ternary quadratic forms.

However, one can do significantly better by averaging over suitable families of quadratic forms. Indeed, let $$
Q(\x) = x_1^2 -\alpha_2 x_2^2 -\alpha_3x_3^2.
$$
Then for any fixed $\alpha_2 > 0$ and almost all $\alpha_3 \in [1/2,1],$ Bourgain~\cite{Bourgain16} showed that $\delta(N) \ll N^{-2/5+\ve}$. In the same article, assuming the Lindel\"of hypothesis for the Riemann zeta function, Bourgain improved this to $\delta(N) \ll N^{-1+\ve}$. This result is essentially optimal, as can be seen by simple pigeonhole heuristics (see e.g.~\cite[Page 2]{GGN20}).

Furthermore, Bourgain's result has been generalised to diagonal ternary forms of higher degree by work of Schindler~\cite{Schindler20}. The analogue of Bourgain's theorem for generic ternary indefinite forms was proved by Ghosh and Kelmer~\cite{GK18}. There has been extensive work recently in proving effective results for generic (i.e. a full measure set in the corresponding moduli space) forms, see \cite{GGN20, AM18, KelmerYu20, GH21}. 

In this article, we consider the problem of obtaining quantitative results to ternary Diophantine inequalities on average over a one-parameter family in the spirit of Bourgain and Schindler (op. cit.).

\subsection{Diophantine inequalities over primes}
A central topic in number theory is the investigation of prime solutions to Diophantine equations. Two well-known results of this kind are Vinogradov's theorem~\cite{Vinogradov37}, which states that any odd integer can be written as a sum of three primes, and Hua's~\cite{Hua38} theorem, that any sufficiently large integer $n \equiv 5 \pmod{24}$ can be written as a sum of five squares of primes. We refer the reader to the conjectures of Bourgain, Gamburd and Sarnak~\cite{BGS10} on the existence of prime points on affine varieties. 

A related problem concerns Diophantine approximation in the primes. Given non-zero real numbers $\lambda_1,\lambda_2,\lambda_3$, not all of the same sign, Baker~\cite{Baker67} showed that if at least one of the ratios $\lambda_i/\lambda_j$ is irrational, then there are infinitely many primes $p_i$ such that 
$$
|\lambda_1p_1 + \lambda_2p_2 + \lambda_3p_3| < (\log p)^{-A},
$$ 
where $p = \max_{i = 1,2,3} p_i$, for any $A > 0$ (see also recent work of Matom\"{a}ki~\cite{Matomaki10}). 

For quadratic forms in at least $5$ variables, and higher degree forms in sufficiently many variables, quantitative results were first obtained independently by Ramachandra~\cite{Ramachandra73} and Vaughan~\cite{V74}. Even though these results have since been improved (see~\cite{BH82,CH06}), the number of variables required in the quadratic case has remained at $5$.  

By averaging over a one-parameter family, we show the following quantitative Diophantine approximation theorem for ternary diagonal quadratic forms.
\begin{theorem}\label{thmprime}
Let $A > 0$ be an integer. For any fixed $\alpha_2 > 0$ and almost every $\alpha_3 \in [1/2,1],$ we have
$$
\min_{\substack{p_i \sim N \\ p_i \text{ prime}}}|p_1^2 -\alpha_2 p_2^2 - \alpha_3 p_3^2| \ll_{A,\alpha_2,\alpha_3} (\log N)^{-A}.
$$
\end{theorem}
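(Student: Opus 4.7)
The plan is to adapt Bourgain's $L^2$-averaging version of the Davenport--Heilbronn circle method from~\cite{Bourgain16} to the setting of prime variables, using the Vinogradov--Vaughan minor-arc bound for exponential sums over primes in place of Weyl's inequality. Setting $\delta = (\log N)^{-A}$ and fixing a non-negative Schwartz bump $\Phi \ge \mathbf{1}_{[-1,1]}$ with $\hat\Phi$ compactly supported, I would consider the weighted count
\[
R(\alpha_3) = \sum_{p_i \sim N}(\log p_1)(\log p_2)(\log p_3)\,\Phi\bigl(Q(\mathbf{p})/\delta\bigr) \;=\; \delta\int_\RR \hat\Phi(\delta t)\,S(t)\,S(-\alpha_2 t)\,S(-\alpha_3 t)\,dt,
\]
where $S(t) = \sum_{p \sim N}(\log p)\,e(tp^2)$, and aim to prove $R(\alpha_3) > 0$ for almost every $\alpha_3 \in [1/2,1]$, which by non-negativity of $\Phi$ forces a prime triple satisfying $|Q(\mathbf{p})| \le \delta$.

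Partition the $t$-integral into the main arc $\mathfrak{M} = \{|t| \le T_0\}$, with $T_0 = N^{-2}(\log N)^B$ for $B = B(A)$ large, and the error arc $\mathfrak{m} = \{T_0 < |t| \ll \delta^{-1}\}$ (the upper cutoff coming from decay of $\hat\Phi$). On $\mathfrak{M}$, Siegel--Walfisz writes $S(t) = I(t) + O(Ne^{-c\sqrt{\log N}})$ with $I(t) = \int_{N/2}^N e(tx^2)\,dx$, and reversing integration orders identifies the resulting main contribution with the $\delta$-thickened volume of $\{Q=0\}\cap[N/2,N]^3$, yielding $M(\alpha_3) \gg \delta N$, uniformly in $\alpha_3 \in [1/2,1]$ and in the fixed $\alpha_2$. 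For the error arc, expanding $S(-\alpha_3 t) = \sum_{p_3 \sim N}(\log p_3)\,e(-\alpha_3 t p_3^2)$, applying Cauchy--Schwarz in $p_3$, substituting $y=\alpha_3 p_3^2$ and invoking Plancherel give the $L^2$-bound
\[
\int_{1/2}^1 |R_{\mathfrak{m}}(\alpha_3)|^2\,d\alpha_3 \;\ll\; \delta^2\!\int_{\mathfrak{m}} |\hat\Phi(\delta t)|^2\,|S(t)|^2\,|S(-\alpha_2 t)|^2\,dt.
\]
The right-hand side is estimated by further decomposing $\mathfrak{m}$ according to the Dirichlet approximations of $\{t\}$ and $\{-\alpha_2 t\}$: on the Vaughan minor arcs---where at least one of these fractional parts lies away from every rational with denominator $\le (\log N)^{B'}$---the Waring--Goldbach-type bound $|S(\cdot)| \ll N(\log N)^{-A'}$, for arbitrarily large $A'$, supplies the requisite logarithmic savings.

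The principal obstacle is the complementary joint Dirichlet-major region, where both $\{t\}$ and $\{-\alpha_2 t\}$ are simultaneously close to rationals of denominator $\le (\log N)^{B'}$, so that no pointwise log-saving is available on $|S(t)S(-\alpha_2 t)|$. For generic $\alpha_2$, the arithmetic relation forced by simultaneous near-rationality confines $t$ to a neighbourhood of $0$ lying inside $\mathfrak{M}$, removing this region from consideration; for rational or badly approximable $\alpha_2$ it is handled directly using the explicit Gauss-sum expansion of $S$ on each major arc, together with the smallness ($\ll (\log N)^{O(1)}/N^2$ per unit $t$-interval) of the joint Dirichlet-major set and the trivial bound $|S| \ll N$. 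With these estimates in place, Chebyshev's inequality gives a bound for $\meas\{\alpha_3\in[1/2,1]:|R_{\mathfrak{m}}(\alpha_3)| > M(\alpha_3)/2\}$ that is summable over $N = 2^k$, whence Borel--Cantelli (together with monotonicity across dyadic scales) yields $R(\alpha_3) > 0$ for all sufficiently large $N$ and almost every $\alpha_3$, completing the proof.
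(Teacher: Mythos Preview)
Your approach has a genuine gap at the $L^2$-averaging step. After Cauchy--Schwarz in $p_3$ and Plancherel you arrive at
\[
\int_{1/2}^{1}|R_{\mathfrak m}(\alpha_3)|^2\,d\alpha_3 \;\ll\; \delta^2\int_{\mathfrak m}|S(t)|^2\,|S(-\alpha_2 t)|^2\,dt,
\]
and you need the integral on the right to be $\ll N^2(\log N)^{-1-\varepsilon}$ in order for Chebyshev plus Borel--Cantelli (over $N=2^k$) to close. But the diagonal $p_1=p_2$, $p_3=p_4$ alone contributes
\[
|\mathfrak m|\Bigl(\sum_{p\sim N}(\log p)^2\Bigr)^{2}\;\asymp\;(\log N)^{A}\cdot (N\log N)^{2}\;=\;N^{2}(\log N)^{A+2},
\]
so the right-hand side is \emph{at least} $N^2(\log N)^{A+2}$, which is too large by a factor $(\log N)^{A+3+\varepsilon}$. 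No Dirichlet-arc dissection of $\mathfrak m$ can beat this: the Waring--Goldbach bound $|S(\cdot)|\ll N(\log N)^{-A'}$ is a pointwise sup-bound, and extracting it leaves a second moment of the remaining factor whose size is $\asymp N$ per unit $t$-interval, yielding a full power of $N$ loss; while on the joint major arcs both $|S(t)|$ and $|S(-\alpha_2 t)|$ are genuinely of size $\asymp N/\sqrt{q}$ and no logarithmic cancellation is available. The underlying structural problem is that in the additive setup $\alpha_3$ sits inside $S(-\alpha_3 t)$, so to perform Plancherel you must first Cauchy--Schwarz away the third exponential sum --- and with it the only source of the required extra saving.

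The paper avoids this by working in the multiplicative (logarithmic) setup \`a la Bourgain: one passes to the inequality $|\log(p_1^2-\alpha_2 p_2^2)-\log(\alpha_3 p_3^2)|<c_0\delta N^{-2}$, so that the $\alpha_3$-dependence is the pure character $e^{-it\log\alpha_3}$. After Parseval in $\log\alpha_3$ one obtains $\int|F_1(t)|^2|F_2(-2t)|^2\,dt$ with $F_2(t)=\sum_p\omega_3(p/N)p^{it}$ \emph{independent of $\alpha_3$}. Now $\sup_{N^{1/10}\le|t|\le N^3}|F_2(t)|$ can be pulled out and bounded by $N\exp\bigl(-c(\log N)^{1/3}(\log\log N)^{-1/3}\bigr)$ via the Vinogradov--Korobov zero-free region for $\zeta(s)$; this replaces your proposed Vaughan minor-arc input and is what supplies the saving. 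The residual integral $\int|F_1(t)|^2\,dt$ becomes the four-variable Diophantine count $|y_1^2-y_2^2-\alpha_2(y_3^2-y_4^2)|\ll N^2/U$, handled with a sharp (log rather than $N^{\varepsilon}$) bound for the shifted convolution $\sum_{l\le L}\sum_{m\le M}r(m)r(m+l)$.
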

In addition to the Vinogradov-Korobov zero-free region, the key input in the proof of the theorem is a sharp upper bound for the shifted sum
$$
\sum_{0 \leq l \leq L}\sum_{m \leq M}r(m)r(m+l),
$$
where $r(n)$ is the number of ways of representing an integer as a sum of two squares. Similarly, to obtain a version of Theorem~\ref{thmprime} for degree $k \geq 3$ forms, one would need sharp upper bounds for the sum
$$
\sum_{0 \leq l \leq L}\sum_{m \leq M}r_k(m)r_k(m+l)
$$
where $r_k(m)$ is the number of ways of writing $m$ as a sum of two $k$-th powers, which appears to be a challenging prospect. 

It will be apparent from the proof of Theorem~\ref{thmprime} that it can be stated in slightly greater generality. Indeed, we may replace the primes with elements that lie in any other set $S,$ provided that we understand the distribution of the elements of $S$ in short intervals (i.e. intervals of the form $[N, N+N^{\beta}]$ for some $\beta > 0$), and if we can show that
$$
\sum_{n\in S}w(n/N)n^{it} = o(|[N,2N]\cap S|),
$$
uniformly for $N^{1/10} \leq |t| \leq N^3$ and for $w(x)$ any smooth function with support in $[1/2,2]$. For example, in this way, one can prove a version of Theorem~\ref{thmprime} with the primes replaced with integers that are sums of two squares, but we leave the details to the interested reader.

\subsection{Effective Oppenheim conjecture for inhomogeneous quadratic forms}
Our second theorem considers inhomogeneous ternary forms. An inhomogeneous quadratic form is a quadratic form together with a fixed shift. Namely, let $F$ be a quadratic form and $\bs{\theta} \in \RR^s$ be a vector. Then the inhomogeneous quadratic form is $F(\x + \bs{\theta})$  for $\x \in \RR^s$.
Values of inhomogeneous forms at integer points have been studied extensively  \cite{Marklof03-a, Marklof03-b, MaMo11, StVi20, BG21}. In \cite{GKY20-a, GKY20-b}, Ghosh, Kelmer and Yu proved effective results for inhomogeneous quadratic forms. In \cite{GKY20-a}, they considered the case of a fixed shift, and in the case of a fixed rational shift obtained in addition, an analogue of Bourgain's theorem above. More precisely, the averaging in their result is over all quadratic forms rather than diagonal forms. In \cite{GKY20-b}, they treated the case of a fixed quadratic form and let the shift vary. A fuller account of these recent developments can be found in our upcoming survey. Here we consider ternary diagonal inhomogeneous forms and prove the following quantitative theorem.  

\begin{theorem}\label{thmshift}
Let $k \geq 2$ be an integer and let $F(\x) = x_1^k -\alpha_2x_2^k - \alpha_3 x_3^k$ and $\ve > 0$. Let $\bs{\theta} = (\theta_1,\theta_2,\theta_3) \in \RR^3$ be a fixed vector. Then for any fixed $\alpha_2 >0$ and almost every $\alpha_3 \in [1/2,1]$ the following statements hold:
\begin{enumerate}
\item Assume the exponent pair conjecture. Then we have
$$
\min_{\substack{\x \in \ZZ^3 \\ |\x| \sim N}}|F(\x+\bs{\theta)}| \ll_{\ve,\alpha_2,\alpha_3,\bs{\theta}} N^{k-3+\ve}.
$$
\item Unconditionally, we have
$$
\min_{\substack{\x \in \ZZ^3 \\ |\x| \sim N}}|F(\x+\bs{\theta)}| \ll_{\ve,\alpha_2,\alpha_3,\bs{\theta}} N^{k-12/5+\ve}.
$$
\end{enumerate}
\end{theorem}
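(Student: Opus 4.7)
The approach follows the Bourgain–Schindler template. Fix the target $\delta$ as in the two cases of the theorem, choose a nonnegative smooth bump $w$ supported on $\{|\x|\sim 1\}$ and a nonnegative smooth majorant $\psi\geq\chi_{[-1,1]}$ with $\hat\psi$ of compact support, and set
$$
\mathcal N_\delta(\alpha_3)=\sum_{\x\in\ZZ^3}w(\x/N)\,\psi\!\left(\tfrac{F(\x+\bs\theta)}{\delta}\right).
$$
It is enough to show $\mathcal N_\delta(\alpha_3)>0$ for almost every $\alpha_3\in[1/2,1]$. Fourier inversion together with the diagonal shape of $F$ gives
$$
\mathcal N_\delta(\alpha_3)=\delta\int_{\RR}\hat\psi(\delta u)\,T_1(u)T_2(u)T_3(u;\alpha_3)\,du,
$$
with the univariate Weyl sums $T_j(u)=\sum_{x\in\ZZ}w_j(x/N)\,e\bigl(\pm u\alpha_j(x+\theta_j)^k\bigr)$ (and $\alpha_1=1$). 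On the low range $|u|\leq U_0:=N^{-k+\eta}$, Poisson summation in each $x_j$ isolates the $m=0$ frequency and produces the expected main term $\mathcal M\asymp\delta N^{3-k}$, which is $\gg N^{\ve/2}$ in case (i) and larger in case (ii).

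\textbf{Variance estimate.} Writing $\mathcal E(\alpha_3)=\mathcal N_\delta(\alpha_3)-\mathcal M$ for the tail contribution $|u|\geq U_0$, the crux of the argument is to bound
$$
V:=\int_{1/2}^{1}|\mathcal E(\alpha_3)|^2\,d\alpha_3=\delta^2\!\iint\hat\psi(\delta u)\overline{\hat\psi(\delta v)}T_1(u)\overline{T_1(v)}T_2(u)\overline{T_2(v)}\,\mathcal K(u,v)\,du\,dv,
$$
where $\mathcal K(u,v)=\int_{1/2}^{1}T_3(u;\alpha_3)\overline{T_3(v;\alpha_3)}\,d\alpha_3$. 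Opening $T_3\overline{T_3}$ and performing the $\alpha_3$ integration first produces a factor $\min(1,|v(y_3+\theta_3)^k-u(x_3+\theta_3)^k|^{-1})$, which concentrates $(u,v,x_3,y_3)$ near the nonlinear diagonal $u(x_3+\theta_3)^k=v(y_3+\theta_3)^k$. One then saves in the free variables by bounding $T_1$ and $T_2$ via $k$-th-derivative exponential-sum estimates. In case (i), the exponent pair conjecture furnishes essentially optimal cancellation $|T_j(u)|\ll N^{1/2+\ve}$ throughout $|u|\geq U_0$, forcing $V\ll_A N^{-A}$. In case (ii) one inserts the best unconditional van der Corput bound (as in Schindler), and optimises the threshold $U_0$ against the length $\delta^{-1}$ of the $u$-range; the exponent $12/5$ emerges from this balance, specialising to Bourgain's $N^{-2/5+\ve}$ at $k=2$.

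\textbf{Borel–Cantelli and main obstacle.} Specialising to $N=N_j=2^j$, summability of $V_j/\mathcal M_j^2$ combined with Chebyshev's inequality and Borel–Cantelli yields $\mathcal E_j(\alpha_3)=o(\mathcal M_j)$, and therefore $\mathcal N_{\delta_j}(\alpha_3)>0$, for almost every $\alpha_3\in[1/2,1]$ and all large $j$; this produces an $\x\in\ZZ^3$ with $|\x|\sim N_j$ and $|F(\x+\bs\theta)|\leq\delta_j$, as required. The principal obstacle is the variance estimate itself: after the $\alpha_3$-average localises the sum to the locus $u(x_3+\theta_3)^k=v(y_3+\theta_3)^k$, one still has to extract cancellation from a two-parameter Weyl integral in $(u,v)$ whose phases are driven by the $k$-th derivatives of the $(x+\theta_j)^k$. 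The quality of the available exponential-sum bounds controls everything, and the split between the two parts of the theorem exactly mirrors the gap between conjectural square-root cancellation and the best known $k$-th derivative estimates; the shift $\bs\theta$ merely translates the Weyl sums and introduces no genuine analytic difficulty.
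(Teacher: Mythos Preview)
Your outline follows the Bourgain--Schindler template in spirit, but the variance estimate as you describe it does not close. Taking only pointwise square-root cancellation $|T_j(u)|\ll N^{1/2+\ve}$ for $j=1,2$ and integrating the localising kernel $\mathcal K$ over the box $|u|,|v|\le C\delta^{-1}$ gives $\iint|\mathcal K|\,du\,dv\ll \delta^{-1}N^{2-k+\ve}$, hence $V\ll \delta N^{4-k+\ve}$; since $\mathcal M\asymp \delta N^{3-k}$, one finds $V/\mathcal M^{2}\ll \delta^{-1}N^{k-2+\ve}$, which is summable only for $\delta\gg N^{k-2+\ve}$, a full power of $N$ short of part~(1). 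The missing ingredient is a genuine \emph{mean-value} saving for the pair $(x_1,x_2)$: in the paper one first passes to the logarithmic phase so that the $\alpha_3$--average becomes a Parseval identity in a single variable $t$, and the resulting $\int |F_1(t)|^{2}\,dt$ is then controlled by the four-variable count
\[
\#\Bigl\{\y\in[N,2N]^4:\ \bigl|(y_1+\theta_1)^k-\alpha_2(y_2+\theta_2)^k-(y_3+\theta_1)^k+\alpha_2(y_4+\theta_2)^k\bigr|\le \Delta\Bigr\}\ll N^{2+\ve}+\Delta N^{4-k+\ve},
\]
which is Theorem~\ref{prop1} (an inhomogeneous Robert--Sargos bound). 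This is precisely the step that recovers the extra factor of $N$; without it no amount of pointwise Weyl-sum input---even the exponent pair conjecture---suffices.

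Two further remarks. First, your assertion that the shift $\bs\theta$ ``introduces no genuine analytic difficulty'' is too optimistic: the Robert--Sargos bound above is not available off the shelf for $\theta_i\neq 0$, and an entire section of the paper is devoted to adapting their $B$-process argument to the shifted setting. Second, the unconditional exponent $12/5$ does \emph{not} arise by simply inserting the classical exponent pair $(1/6,1/6)$ into the scheme above; doing so in Proposition~\ref{propshift} yields only $\delta\gg N^{k-9/4+\ve}$. The improvement to $12/5$ requires the additional bilinear/large-values argument of Bourgain (\S3 of his paper) and Schindler (\S4), implemented here as Proposition~\ref{12/5bound}. Your proposal should incorporate both the logarithmic reduction and the inhomogeneous four-point counting lemma, and treat the unconditional case via that separate mechanism rather than a direct exponent-pair substitution.
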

We will now briefly recall the exponent pair conjecture. For a more detailed account, see e.g.~\cite[\S 8.4]{IK04}. Let $e(x) = \exp (2\pi i x)$. For real numbers $p,q$, such that $0 \leq p,q \leq 1/2$, we say that $(p,q)$ is an exponent pair if for any fixed $\ve > 0$ we have the bound
\begin{equation*}
\sum_{N \leq n \leq N'} e(f(n)) \ll_{\ve} F^pN^{q-p+1/2}F^{\ve},
\end{equation*}
 for any $2 \leq N \leq N' \leq 2N$, with $f(x)$ a smooth function on $[N,2N]$ such that there exists a positive real number $F \geq N$ with the property that for any $j \geq 0$ we have
\begin{equation*}
FN^{-j} \ll_j |f^{(j)}(x)| \ll_j FN^{-j}
\end{equation*}
for any $x \in [N,2N]$. The Exponent Pair Conjecture is the assertion that $(0,0)$ is an exponent pair. This conjecture, for example, implies the truth of the Lindel\"of hypothesis for the Riemann zeta function $\zeta(s)$.

In the course of proving Theorem~\ref{thmshift}, we also prove sharp upper bounds for the following Diophantine inequality in four variables, which is reminiscent of the problem of simultaneous inhomogeneous Diophantine approximation on curves. 
\begin{theorem}\label{prop1}
Let $\theta_1,\theta_2,\alpha, \beta$ be fixed real numbers. Let $0 < \delta < 1$. Define
$\mathcal{N}(M,\alpha,\delta)$ to be the number of solutions $(m_1,m_2,m_3,m_4) \in \ZZ^4 \cap [M,2M]^4$ to the inequality 
\begin{equation}\label{eq:nmaddef}
|(m_1+\theta_1)^\alpha -(m_2+\theta_1)^\alpha + \beta(m_3+\theta_2)^\alpha -  \beta(m_4+\theta_2)^\alpha| \leq \delta M^{\alpha}.
\end{equation}
Suppose that $\alpha \neq 0,1$. Then for any $\ve > 0$ we have $$\mathcal{N}(M,\alpha,\delta) \ll_{\alpha,\beta,\ve,\theta_1,\theta_2} M^{2+\ve}+\delta M^{4+\ve}.$$
\end{theorem}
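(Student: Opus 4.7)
The plan is to combine Fourier analysis on the exponential sums associated to $F$ with a direct lattice-point argument for the residual contributions at small $\delta$. Setting $g_1(m) = (m+\theta_1)^\alpha$ and $g_2(m) = \beta(m+\theta_2)^\alpha$, one smoothly majorises the indicator of $[-\delta M^\alpha, \delta M^\alpha]$ by $\psi(\cdot/(\delta M^\alpha))$, where $\psi$ is a Schwartz function whose Fourier transform is supported in $[-1,1]$; a smooth bump $w$ majorising $\mathbf{1}_{[1,2]}$ localises the variables to $[M,2M]$. Fourier inversion and the substitution $\xi = \eta/(\delta M^\alpha)$ then yield
\[
\mathcal{N}(M,\alpha,\delta) \ll \delta M^\alpha \int_{|\xi|\leq X}|S_1(\xi)|^2|S_2(\xi)|^2\,d\xi, \qquad X := \tfrac{1}{\delta M^\alpha},
\]
where $S_i(\xi) = \sum_{m\in\ZZ} w(m/M)\,e(\xi g_i(m))$.

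The $\xi$-integration splits into three regimes. In the trivial range $|\xi|\leq M^{-\alpha}$ the bound $|S_i|\leq \|w\|_1 M$ contributes $O(\delta M^4)$. In the moderate range $M^{-\alpha}\leq|\xi|\leq M^{1-\alpha}$, van der Corput's second-derivative test---applicable because $\alpha\neq 0,1$ guarantees $g_i''(m)\asymp M^{\alpha-2}$ on the support of $w(\cdot/M)$---gives $|S_i(\xi)|^2\ll M^{2-\alpha}/\xi$, producing a further $O(\delta M^{4+\ve})$ after integration. In the large-frequency range $M^{1-\alpha}\leq|\xi|\leq X$ van der Corput instead furnishes $|S_i(\xi)|^2\ll M^\alpha \xi$, and naive integration yields a term of order $\delta^{-2}$, which is absorbed into $\delta M^{4+\ve}$ only when $\delta \gtrsim M^{-4/3}$.

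The main obstacle is the residual range $M^{-2}\leq \delta \leq M^{-4/3}$, where neither the Fourier estimate nor monotonicity in $\delta$ delivers the required $M^{2+\ve}$ bound. To handle it I would use a direct combinatorial decomposition of $\mathcal{N}$. Writing $\phi(x,y) := (x+\theta_1)^\alpha + \beta(y+\theta_2)^\alpha$ and fixing the pair $(m_1,m_3)$, one rewrites $|F|\leq \delta M^\alpha$ as $|\phi(m_2,m_4) - c|\leq \delta M^\alpha$ with $c := \phi(m_1,m_3)$, and counts lattice points of $[M,2M]^2\cap\ZZ^2$ in the resulting strip. Since $\alpha\neq 0,1$, one has $|\nabla\phi|\asymp M^{\alpha-1}$ and the diagonal Hessian entries satisfy $|\phi_{xx}|,|\phi_{yy}|\asymp M^{\alpha-2}$, so the level curves of $\phi$ are smooth and generically of non-vanishing curvature (apart from the exceptional level $c=0$ when $\beta<0$, which is treated separately). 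A strip-versus-curve decomposition gives
\[
\#\{(m_2,m_4)\in[M,2M]^2\cap\ZZ^2 : |\phi(m_2,m_4)-c|\leq \delta M^\alpha\} \ll \delta M^2 + N_c,
\]
where $N_c$ counts integer points on $\{\phi=c\}\cap[M,2M]^2$. Summing over the $M^2$ choices of $(m_1,m_3)$, it remains to prove the uniform bound $N_c \ll M^\ve$: for $\alpha$ rational the equation $\phi(x,y)=c$ factors over an appropriate number field and a divisor-type argument delivers this bound, whereas for $\alpha$ irrational the curve is transcendental (yet definable in $\mathbb{R}_{\exp}$) and a Pila--Wilkie-type estimate applies, with implicit constants allowed to depend on $\alpha,\beta,\theta_1,\theta_2$ as permitted by the theorem.
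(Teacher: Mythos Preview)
Your Fourier setup and the identification of the problematic window $M^{-2}\le\delta\le M^{-4/3}$ are both sound; this is exactly where the difficulty lies. The gap is in your treatment of that window. The inequality
\[
\#\{(m_2,m_4)\in[M,2M]^2:\ |\phi(m_2,m_4)-c|\le\delta M^{\alpha}\}\ \ll\ \delta M^{2}+N_c
\]
is not true in general: elementary lattice-point counting in a strip of width $\asymp\delta M$ around a curve of length $\asymp M$ only gives $\delta M^{2}+O(M)$, and summing the $O(M)$ over the $M^{2}$ choices of $(m_1,m_3)$ produces $M^{3}$, not $M^{2+\ve}$. Replacing the boundary term by $N_c$ (points \emph{exactly} on $\{\phi=c\}$) is unjustified: a vertical slice of the strip can contain an integer without the level curve passing through any lattice point. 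What you actually need is a bound of $M^{\ve}$ for lattice points within distance $\asymp M^{-1}$ of a curve of length $M$ and curvature $\asymp M^{-1}$; this is a notoriously hard problem (Huxley-type methods give exponents like $1/2$ or worse), and in this generality it is essentially equivalent to the theorem you are trying to prove. The Pila--Wilkie appeal does not help: for rational $\alpha$ the level curves are algebraic, so Pila--Wilkie is vacuous; for irrational $\alpha$ it bounds rational points \emph{on} a transcendental curve, not lattice points in a tubular neighbourhood, and uniformity in the parameter $c$ would in any case require the family version.

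The paper closes this gap by an entirely different mechanism, following Robert--Sargos. One applies the van~der~Corput $B$-process (Poisson summation and stationary phase) to each exponential sum; the crucial observation is that the shifts $\theta_i$ emerge after dualisation only as linear phases $e(l\theta_i)$, which can be absorbed into unimodular weights. One then has a new fourth-moment integral of the same shape with $\alpha$ replaced by $\bar\alpha=\alpha/(\alpha-1)$ and a shorter range. A bootstrapping lemma (comparing the weighted integral over $[0,X]$ to the unweighted one over $[0,Y]$ for any $Y\le X$) converts this into a recursion $\mathcal H(\alpha,\kappa)\Rightarrow\mathcal H(\alpha,\kappa/(1+\kappa))$, where $\mathcal H(\alpha,\kappa)$ asserts the fourth-moment bound over $[0,M^{2-\kappa}]$. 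Starting from the easy base case $\kappa=1/2$ and iterating pushes the range to $[0,M^{2-\ve_0}]$ for any $\ve_0>0$, which is exactly what is needed for $\delta\ge M^{-2}$. This iterative dualisation is the genuine content of the proof and is not recoverable from a single application of second-derivative bounds together with a lattice-point argument.
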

The result with $\theta_1 = \theta_2 = 0$ is a well-known result of Robert and Sargos~\cite[Theorem 2]{RS06} and the proof of Theorem~\ref{prop1} is an adaptation of their argument. We should remark here that while the bound in the theorem with $\theta_1 = \theta_2 = 0$ can be deduced from recent work of Huang~\cite{Huang20}, it appears much more difficult to do this in the case where $\theta_i \neq 0$.

\subsection{Averaging over two-parameter families}
In this article, we have considered one-parameter families of ternary diagonal forms. One can ask if it might be possible to prove stronger results in Theorems~\ref{thmprime} and~\ref{thmshift} by also averaging over $\alpha_2$ (as in~\cite{Bourgain16} and~\cite{Schindler20}). In order to exploit this extra averaging, we will require sharp upper bounds for 
$$
\int_{T}^{2T}|\sum_{n \sim N}\Lambda(n)n^{it}|^2\, dt,
$$
in the case of Theorem~\ref{thmprime}, and for
$$
\int_{T}^{2T}|\sum_{n \sim N}(n+\theta_3)^{it}|^2\, dt,
$$ 
in the case of Theorem~\ref{thmshift}, with $1 \ll T \ll N^{3}$. One could analyse the integrals using the Montgomery-Vaughan inequality~\cite[Corollary 3]{MV74}, but this is effective only when $T \gg N$, and we are unable to estimate them satisfactorily in the complementary range. We conclude our introduction by remarking that if $\theta_3 = a/q$ is a rational number, then by splitting $n$ into arithmetic progressions modulo $q$, one can nevertheless prove the bound $\int_{T}^{2T}|\sum_{n \sim N}(n+\theta_3)^{it}|^2\, dt \ll_{\theta_3,\ve} TN^{1+\ve}$, which will enable us to prove an optimal version of Theorem~\ref{thmshift} by averaging over both the parameters $\alpha_2$ and $\alpha_3.$

\begin{ack}
We are grateful to the referee for a careful reading of our manuscript, and for several comments that have improved the exposition of the article.
\end{ack}

\section{A Diophantine inequality over the primes}

We now turn to the proof of Theorem~\ref{thmprime}. Using the Borel-Cantelli lemma, it is sufficient to prove the following 
\begin{proposition}\label{propprime}
Fix $\alpha_2 > 0$. Let $N$ be a large parameter. Assume that $\delta < 1$. The inequality 
$$|x_1^2 -\alpha_2 x_2^2 -\alpha_3x_3^2| < \delta$$
has a non-trivial solution with $x_i \sim N$ and $x_i$ prime, for all $\alpha_3 \in [1/2,1]$ except for a set of Lebesgue measure at most 
$$\ll \delta^{-1}(\log N)^8e^{-c(\log N)^{1/3}(\log \log N)^{-1/3}},$$ for some $c > 0$. 
\end{proposition}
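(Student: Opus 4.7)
The plan is a standard mean-value / variance attack through Chebyshev. Fix non-negative smooth bumps $w$ supported on $[1/2,2]$ and $\psi$ supported on $[-1,1]$ with $\psi(0)>0$, and set
$$
R(\alpha_3) := \sum_{n_1,n_2,n_3} \Lambda(n_1)\Lambda(n_2)\Lambda(n_3)\, w(n_1/N) w(n_2/N) w(n_3/N)\, \psi\big(\delta^{-1}(n_1^2 - \alpha_2 n_2^2 - \alpha_3 n_3^2)\big).
$$
Since $R(\alpha_3)>0$ forces the existence of a non-trivial prime solution at scale $N$, it suffices to show that the set of $\alpha_3 \in [1/2,1]$ with $R(\alpha_3) < c\delta N$ has the claimed measure. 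By Fourier inversion, $R(\alpha_3) = \delta\int_{\RR} \widehat\psi(\delta t)\,T(t)\,T(-\alpha_2 t)\,T(-\alpha_3 t)\,dt$, with $T(t) := \sum_n \Lambda(n) w(n/N) e(tn^2)$. Restricting to the major arcs $|t|\leq N^{-2+\ve}$ and invoking the prime number theorem, $T(t)$ matches its smooth analogue $\int w(x/N) e(tx^2)\,dx$, and the trilinear integration produces a positive main term $M(\alpha_3) \gg \delta N$ uniformly in $\alpha_3$.

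For $|t|$ outside the major arcs I would first derive a pointwise bound on $T(t)$. Writing $T(t)$ as a Mellin contour integral against $\zeta'(s)/\zeta(s)$, repeatedly integrating by parts in the Mellin transform of $w(x/N) e(tx^2)$ to exploit the quadratic oscillation, and pushing the contour into the Vinogradov--Korobov zero-free region, one obtains
$$
T(t) \ll N \exp\!\big(-c(\log N)^{1/3}(\log\log N)^{-1/3}\big)
$$
for $t$ in a wide range starting at $|t|\geq N^{-2+\ve}$ (the super-polynomial decay of $\widehat\psi(\delta t)$ cuts off very large $|t|$). Inserting this into the trilinear formula controls two of the three factors of $T$, but the third depends on $\alpha_3$ and must be handled only on average, so I would next bound
$$
V := \int_{1/2}^{1} |R(\alpha_3) - M(\alpha_3)|^2 \, d\alpha_3
$$
and apply Chebyshev.

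Expanding the square, the $\alpha_3$-integral contributes the oscillatory factor $\min(1,|t_1 n_3^2 - t_2 n_3'^2|^{-1})$. Changing variables to $m = n_3^2,\ m' = n_3'^2$ and dyadically decomposing in the gap $\ell = |m-m'|$ reduces the inner count to precisely the shifted convolution $\sum_{0\leq \ell \leq L}\sum_{m\leq M} r(m) r(m+\ell)$ flagged in the introduction. Combining a sharp bound for this sum with the Vinogradov--Korobov saving on the remaining $T(t_i) T(\alpha_2 t_i)$ factors gives $V \ll \delta N^{2}(\log N)^{8}\exp(-c(\log N)^{1/3}(\log\log N)^{-1/3})$, and Chebyshev against the main term $M(\alpha_3) \gg \delta N$ yields the proposition. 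The hardest step is the variance: one must retain the Vinogradov--Korobov saving uniformly over a wide range of $t_1,t_2$ (with the integrability at large $|t|$ supplied only by $\widehat\psi(\delta t)$, accounting for the $\delta^{-1}$ in the final bound), while simultaneously estimating the $r(m)r(m+\ell)$ sum sharply enough to lose no more than $(\log N)^{8}$.
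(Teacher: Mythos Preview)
Your approach has a genuine gap in the pointwise bound for $T(t)=\sum_n \Lambda(n)w(n/N)e(tn^2)$. You claim $T(t)\ll N\exp\!\big(-c(\log N)^{1/3}(\log\log N)^{-1/3}\big)$ for all $|t|\ge N^{-2+\ve}$ via Mellin inversion against $\zeta'/\zeta$ and the Vinogradov--Korobov zero-free region, but this is simply false: for any nonzero integer $t$ (and $t=1$ lies in your range since $\delta<1$ forces $\delta^{-1}>1$) one has $e(tn^2)=1$, so $T(t)=\sum_n \Lambda(n)w(n/N)\sim N$. More generally $T(t)$ is large near every rational with small denominator, and your ``minor arcs'' $|t|\ge N^{-2+\ve}$ contain a full collection of such major arcs. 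The Mellin argument you sketch cannot rescue this: the Mellin transform of $w(x/N)e(tx^2)$ carries a stationary-phase bump of width $\asymp|t|N^2$ at height $|\Im s|\asymp|t|N^2$, and bounding $\zeta'/\zeta$ there and integrating recovers at best the trivial estimate. This is exactly why the paper first passes to the logarithmic inequality $|\log(x_1^2-\alpha_2 x_2^2)-\log(\alpha_3 x_3^2)|<c_0\delta N^{-2}$: the $x_3$-sum then becomes the Dirichlet polynomial $F_2(t)=\sum_p \omega_3(p/N)\,p^{it}$, to which the zero-free region applies directly and uniformly for $N^{1/10}\le|t|\le N^3$.

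A secondary issue: in your variance step the $\alpha_3$-integration yields a spacing condition on the single squares $n_3^2,(n_3')^2$, which is not where $r(m)r(m+\ell)$ enters. In the paper that shifted sum arises instead from $|F_1(t)|^2$, i.e.\ from the four-variable count $|y_1^2-y_2^2-\alpha_2(y_3^2-y_4^2)|\ll N^2/U$; after Cauchy--Schwarz this becomes $|(y_1^2+y_4^2)-(y_2^2+y_3^2)|\le 3\beta$, and setting $m=y_2^2+y_3^2$, $m+\ell=y_1^2+y_4^2$ produces $\sum_\ell\sum_m r(m)r(m+\ell)$. The averaging over $\alpha_3$ is exploited separately via Parseval, which is clean precisely because in logarithmic variables the $\alpha_3$-dependence is the pure oscillation $e^{-it\log\alpha_3}$.
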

Let $\omega_0(x)$ and $\omega(\x) = \prod_{i=1}^3\omega_i(x_i)$ be smooth functions as in~\cite[Section 2]{Schindler20}. In particular, $\omega_0(x)$ is an odd function.

We seek a lower bound for the sum
$$
\sum_{\substack{\x \in \ZZ^3 \\ x_i \text{ prime}}}\omega(\x/N) \bs{1}_{|x_1^2 - \alpha_2x_2^2 - \alpha_3x_3^2| < \delta}.
$$
Since $|\log (m/n)| \geq |m-n|/(m+n)$, it suffices to obtain a lower bound for the sum
$$
S_1(\alpha_3) = \sum_{\substack{\x \in \ZZ^3 \\ x_i \text{ prime}}}\omega(\x/N) \bs{1}_{|\log(x_1^2 - \alpha_2x_2^2) - \log(\alpha_3x_3^2)| < c_0\delta N^{-2}}
$$
for $c_0 > 0$. Define the exponential sums
$$
F_1(t) = \sum_{\substack{\x \in \ZZ^2 \\ x_i \text{ prime}}}\omega_1(x_1/N)\omega_2(x_2/N)e^{it \log(x_1^2 - \alpha_2x_2^2)}
$$
and
$$
F_2(t) = \sum_{\substack{p \in \ZZ \\ p  \text{ prime}}}\omega_3(p/N)e^{it \log p}.
$$
Let $T = \frac{2N^2}{c_0\delta}$ and define
$$
S_2(\alpha_3) = \frac{1}{T}\int_{-\infty}^{\infty}\widehat{\omega_0}(t/T)F_1(t)F_2(-2t)e^{-it \log \alpha_3}\, dt.
$$
Then we have $S_1(\alpha_3) \geq S_2(\alpha_3)$. Let $0 < \theta < 1$ be a real number that we will specify shortly. Split 
$$
\widehat{\omega_0}(t/T) = \widehat{\omega_0}(t/N^{1-\theta}) + \left(\widehat{\omega_0}(t/T) - \widehat{\omega_0}(t/N^{1-\theta}\right).$$
 Let $S_3(\alpha_3)$ denote the contribution from the former and $S_4(\alpha_3)$ the contribution from the latter to $S_2(\alpha_3)$. As in~\cite{Bourgain16} and~\cite{Schindler20}, our strategy will be to obtain a lower bound for $S_3(\alpha_3)$ and an upper bound for the set of $\alpha_3$ such that $S_4(\alpha_3)$ is `large'.

\subsection{A lower bound for $S_3(\alpha)$}
To obtain a lower bound for $S_3(\alpha)$, we will require information on the distribution of primes in short intervals. 

Let $\pi(x)$ denote the number of primes not exceeding $x$. By work of Huxley~\cite{Huxley72}, we know that if $\theta > 7/12$, then there exists $x(\theta)$ such that for all $x \geq x(\theta)$ we have
$$
\pi(x+x^{\theta}) - \pi(x) \sim \frac{x^{\theta}}{\log x}.
$$
For the rest of this section, we will fix $\theta = 2/3$. Then we have the following result.
\begin{lemma}
Let $\theta = 2/3.$ Then there exists a constant $c_2 > 0$ such that for all $\alpha_3$ in $[1/2,1]$ we have 
$$S_3(\alpha_3) \geq  c_2 \delta N/(\log N)^3.$$
\end{lemma}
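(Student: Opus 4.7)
The plan is to exploit the rapid decay of $\widehat{\omega_0}$ to localize the $t$-integral to $|t|\leq N^{1/3}(\log N)^A$, and within this range to approximate the prime exponential sums $F_1(t)$ and $F_2(-2t)$ by smooth integrals over the reals using Huxley's theorem on primes in short intervals. The choice $\theta=2/3$ sits exactly at the balance point: Huxley's result requires intervals of length at least $x^{7/12+\ve}$, while we need $|t|\ll N^{1-\theta}$ for the phases to be essentially constant on such intervals.

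Since $\widehat{\omega_0}$ is Schwartz, we may truncate the $t$-integral in $S_3(\alpha_3)$ to $|t|\leq N^{1/3}(\log N)^A$ at the cost of an error $O_A(N^{-10})$. In this range, for $x_i\in\supp\omega_i$, the partial derivatives of the phases $t\log(x_1^2-\alpha_2 x_2^2)$ and $-2t\log x_3$ are $O((\log N)^A/N^{2/3})$; partitioning $[N,2N]$ into sub-intervals of length $H = N^{2/3}/(\log N)^{A+B}$, the phases are constant up to $O((\log N)^{-B})$ on each piece. Now invoke the sharp form of Huxley's theorem made uniform via the Vinogradov--Korobov zero-free region: for $h\geq x^{2/3}(\log x)^{-B}$ and any fixed $C>0$,
\begin{equation*}
\pi(x+h) - \pi(x) = \frac{h}{\log x}\bigl(1+O((\log x)^{-C})\bigr).
\end{equation*}
Combined with partial summation on each sub-interval, this yields
\begin{equation*}
F_2(-2t) = \int \omega_3(u/N)\frac{e^{-2it\log u}}{\log u}\,du + O\!\left(\frac{N}{(\log N)^C}\right),
\end{equation*}
uniformly for $|t|\leq N^{1/3}(\log N)^A$; an analogous two-dimensional statement holds for $F_1(t)$ by treating each variable in turn. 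Using $|F_1|\ll N^2/(\log N)^2$ and $|F_2|\ll N/\log N$ to bound the cross errors, the total error these introduce into $S_3(\alpha_3)$ is $O(\delta N/(\log N)^{C'})$ for any fixed $C'$, hence negligible compared with the desired main term.

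Interchanging orders of integration and applying Fourier inversion in the main term gives
\begin{equation*}
S_3(\alpha_3) = \int_{\RR^3}\frac{\omega(\uu/N)}{\log u_1\,\log u_2\,\log u_3}\,K(y(\uu,\alpha_3))\,d\uu \;+\; O\!\left(\frac{\delta N}{(\log N)^{C'}}\right),
\end{equation*}
where $y(\uu,\alpha_3) = \log(u_1^2-\alpha_2 u_2^2) - \log(\alpha_3 u_3^2)$ and
\begin{equation*}
K(y) = \frac{1}{T}\int \widehat{\omega_0}(t/N^{1/3})\,e^{ity}\,dt = \frac{N^{1/3}}{T}\,\omega_0\!\left(\tfrac{N^{1/3}y}{2\pi}\right).
\end{equation*}
Rescaling $u_i = N v_i$ and then, for fixed $v_1,v_2$, changing variables from $v_3$ to $y$ (with Jacobian $|dv_3/dy| = v_3/2 \asymp 1$ on $\supp\omega$) collapses the $y$-integration onto a window of width $\sim N^{-1/3}$. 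Using the normalisation properties of $\omega_0$ from~\cite[\S 2]{Schindler20}, this integral is of order $N^3(\log N)^{-3}\cdot N^{-1/3}$, so
\begin{equation*}
S_3(\alpha_3) \;\gtrsim\; \frac{N^{1/3}}{T}\cdot\frac{N^3}{(\log N)^3}\cdot N^{-1/3} \;=\; \frac{c_0\,\delta\,N}{2(\log N)^3},
\end{equation*}
uniformly for $\alpha_3\in[1/2,1]$. The main obstacle is securing Huxley's asymptotic uniformly in $t$ with a power-of-logarithm error saving; this is precisely where the Vinogradov--Korobov zero-free region (highlighted in the introduction) enters.
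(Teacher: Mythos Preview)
There is a genuine gap in your error analysis. You claim that replacing $F_1,F_2$ by their integral approximations $\tilde F_1,\tilde F_2$ introduces an error of $O(\delta N/(\log N)^{C'})$ into $S_3(\alpha_3)$, but this is off by a factor of $N^{1/3}$. Your short-interval argument gives at best $|F_2(t)-\tilde F_2(t)|\ll N/(\log N)^C$ uniformly for $|t|\le N^{1/3}(\log N)^A$ (the global prime number theorem yields only $(1+|t|)N\exp(-c(\log N)^{3/5})$, which is worse near $|t|\sim N^{1/3}$). Since $\widehat{\omega_0}(t/N^{1/3})$ has $L^1$-norm $\asymp N^{1/3}$, the cross term is bounded by
\[
\frac{1}{T}\int\bigl|\widehat{\omega_0}(t/N^{1/3})\bigr|\,|\tilde F_1(t)|\,|F_2(-2t)-\tilde F_2(-2t)|\,dt
\;\ll\;\frac{1}{T}\cdot N^{1/3}\cdot\frac{N^2}{(\log N)^2}\cdot\frac{N}{(\log N)^C}
\;\asymp\;\frac{\delta N^{4/3}}{(\log N)^{C+2}},
\]
which dominates the main term $\delta N/(\log N)^3$. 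No pointwise bound of the type you invoke can recover this lost $N^{1/3}$.

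The paper avoids this by reversing the order of the two steps. Instead of approximating $F_1,F_2$ and then inverting, it applies Fourier inversion \emph{exactly} to $S_3(\alpha_3)$ and uses the non-negativity of $\omega_0$ to obtain
\[
S_3(\alpha_3)\;\ge\;\frac{N^{1/3}}{T}\sum_{\substack{\x\in\ZZ^3\\ x_i\text{ prime}}}\omega(\x/N)\,\bs{1}_{[|x_1^2-\alpha_2 x_2^2-\alpha_3 x_3^2|<c_1 N^{5/3}]}.
\]
Now primes in short intervals enter only as a \emph{lower bound}: for fixed primes $x_2,x_3$ the prime $x_1$ ranges over an interval of length $\asymp N^{2/3}$, and Huxley's asymptotic (no quantitative error term needed) gives $\gg N^{2/3}/\log N$ such primes. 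Summing over $x_2,x_3$ finishes the argument with no loss.
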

\begin{proof}
We have
$$
S_3(\alpha_3) \geq \frac{N^{1-\theta}}{T}\sum_{\substack{\x \in \ZZ^3 \\ x_i \text{ prime}}}\omega(\x/N)\bs{1}_{[|\log (x_1^2 - \alpha_2 x_2^2) - \log (\alpha_3x_3^2)| < N^{-1+\theta}]}.
$$
Hence there exists a constant $c_1 > 0$ such that 
$$
S_3(\alpha_3) \geq \frac{N^{1-\theta}}{T}\sum_{\substack{\x \in \ZZ^3 \\ x_i \text{ prime}}}\omega(\x/N)\bs{1}_{[|x_1^2 - \alpha_2 x_2^2 - \ \alpha_3x_3^2| < c_1N^{1+\theta}]}.
$$
We have
\begin{align*}
\sum_{\substack{\x \in \ZZ^3 \\ x_i \text{ prime}}}\omega(\x/N)&\bs{1}_{[|x_1^2 - \alpha_2 x_2^2 - \ \alpha_3x_3^2| < c_1N^{1+\theta}]} = \sum_{\substack{\x \in \ZZ^3 \\ x_i \text{ prime}}}\omega_2(x_2/N)\omega_3(x_3/N) \\
&\quad\times\sum_{\substack{\alpha_2 x_2^2 + \alpha_3 x_3^2 - c_1N^{1+\theta} \leq x_1^2 \leq \alpha_2x_2^2 +\alpha_3x_3^2 + c_1N^{1+\theta} \\ x_1 \text{ prime}}}\omega_1(x_1). 
\end{align*}
Since $\theta = 2/3 > 7/12$, we see that the inner sum is $\gg N^{\theta}/\log N$ for any fixed $x_2$ and $x_3$. Summing over $x_2$ and $x_3$ completes the proof of the lemma.
\end{proof}

\subsection{Reduction to a counting problem} 
Using Chebyshev's inequality, we get
\begin{align*}
\meas \left\{\alpha_3 \in [1/2,1] : |S_4(\alpha_3)| \geq \frac{c_2}{2} \frac{\delta N}{(\log N)^3}\right\} &\leq 4c_2^{-2}\delta^{-2}(\log N)^6N^{-2}\times \\
&\quad \quad \int_{1/2}^1|S_4(\alpha_3)|^2\, d\alpha_3.
\end{align*}
Since $\omega_0$ was chosen to be an odd function, there exists a constant $c_3$ such that
$$
\bigg\vert\omega_0\left(\frac{t}{T}\right) - \omega_0\left(\frac{t}{N^{1/3}}\right)\bigg\vert \leq c_3 \min\left(1, \frac{t^2}{N^{2/3}},\left(\frac{T}{|t|}\right)^{10}\right).
$$
By Parseval's theorem, we get
$$
\int_{1/2}^1 |S_4(\alpha_3)|^2\, d\alpha_3 \leq c_3^{2}T^{-2}\int_{-\infty}^{\infty}\min\left(1, \frac{t^4}{N^{4/3}},\left(\frac{T}{|t|}\right)^{20}\right)|F_1(t)|^2|F_2(t)|^2\, dt.
$$
Estimating $F_1(t)$ and $F_2(t)$ trivially over the interval $|t| \leq N^{1/10}$, and using~\cite[Lemma 2.2]{Schindler20} in the complementary range we find that
$$
\meas \left\{\alpha_3 \in [1/2,1] : |S_4(\alpha_3)| \geq (c_2/2)\delta N/(\log N)^3\right\} \leq c_4(\log N)^6 N^{-5/6} + I_2,
$$
where
\begin{equation}\label{eq:pf2i4}
I_2 \leq N^{-6}(\log N)^7 \max_{|t| \geq N^{1/10}}\left(\min\left(1,\left(\frac{T}{|t|}\right)\right)|F_2(t)|\right)^2\sup_{1/\log N \ll U \ll T}I_4(U)
\end{equation}
with 
$$
I_4(U) = U\sum_{\substack{\y \in \ZZ^4 \\ y_i \text{ prime}}}\widetilde{\omega}(\y/N)\bs{1}_{[|y_1^2-y_2^2-\alpha_2(y_3^2-y_4)^2|\ll N^2/U}.
$$
Applying Theorem~\ref{prop1} with $\alpha = 2$ and $\theta_i = 0$, we easily obtain the bound 
$$
I_4(U) \ll (U+N^2)N^{2+\ve}
$$
for any $\ve > 0$. In the next result, we show that we may remove the $N^{\ve}$ factor at the expense of a power of $\log N$. 
\begin{lemma}\label{lemmadiop}
For $1/\log N \leq U \leq T$, we have
$$
I_4(U) \ll \left(UN^2 + N^4\right)(\log N).
$$
\end{lemma}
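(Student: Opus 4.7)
My plan is to rewrite the count defining $I_4(U)$ in terms of the representation function
\[
R(m) := \#\{(y_1, y_2) : y_1, y_2 \text{ prime}, y_i \sim N,\ y_1^2 - y_2^2 = m\},
\]
reduce to its $L^2$-moment by AM--GM, and then bound this moment via the classical estimate $\sum_{k \le X} r(k)^2 \ll X \log X$ for the sum-of-two-squares function $r$, which is the $l = 0$ case of the shifted convolution sum highlighted in the introduction.

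Setting $L := C N^2/U$ for the implied constant in the condition defining $I_4(U)$, bounding $\widetilde{\omega}$ by the indicator of its support (times a constant), and organising the count by the integer values $m = y_1^2 - y_2^2$ and $n = y_3^2 - y_4^2$, I obtain
\[
I_4(U) \ll U \sum_{\substack{m, n \in \ZZ \\ |m - \alpha_2 n| \le L}} R(m)\, R(n).
\]
Next I apply the elementary inequality $R(m) R(n) \le \tfrac12(R(m)^2 + R(n)^2)$. For any fixed $m$, the set $\{n \in \ZZ : |m - \alpha_2 n| \le L\}$ lies in an interval of length $2L/\alpha_2$, hence has cardinality $\ll_{\alpha_2} 1 + L$, and likewise for fixed $n$. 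This gives
\[
I_4(U) \ll_{\alpha_2} U(1+L) \sum_m R(m)^2.
\]

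For the second moment I use the symmetry
\[
y_1^2 - y_2^2 = y_3^2 - y_4^2 \iff y_1^2 + y_4^2 = y_2^2 + y_3^2
\]
to identify $\sum_m R(m)^2$ with the number of prime quadruples $(y_1, y_2, y_3, y_4)$ of size $N$ satisfying $y_1^2 + y_4^2 = y_2^2 + y_3^2$. Bounding this prime count by the corresponding integer count yields $\sum_m R(m)^2 \le \sum_{k \asymp N^2} r(k)^2 \ll N^2 \log N$, where the last step is the standard estimate coming from the double pole of $\sum r(k)^2 k^{-s}$ at $s = 1$ (equivalently, the $l = 0$ case of the shifted convolution bound emphasised in the introduction). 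Substituting this in and using $UL = C N^2$ yields the claimed bound $I_4(U) \ll (UN^2 + N^4) \log N$.

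The main obstacle is obtaining the correct single factor of $\log N$: the naive bound $R(m) \le d(|m|)$ combined with the well-known $\sum_{k \le X} d(k)^2 \asymp X (\log X)^3$ would produce $(UN^2 + N^4)(\log N)^3$, losing two factors of $\log N$. This loss is avoided precisely because the symmetry converts the difference-of-squares equation into a sum-of-squares equation, allowing the sharper $L^2$ bound for $r$ to be invoked in place of the cruder $L^2$ bound for $d$.
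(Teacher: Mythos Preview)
Your proof is correct, and in fact takes a more streamlined route than the paper's own argument.

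The paper first treats the range $U\ll N$ by a trivial count (fixing $y_2,y_3,y_4$ and locating $y_1$), and for $U\gg N$ applies a binning-plus-Cauchy--Schwarz device (after Browning~\cite{TB03}) to eliminate $\alpha_2$, arriving at the count $M(\beta,N)$ of integer quadruples with $|y_1^2-y_2^2-(z_1^2-z_2^2)|\le 3\beta$. Since this still carries an inequality, the paper must then bound the full shifted sum $\sum_{0\le l\le 3\beta}\sum_{m} r(m)r(m+l)$, appealing to~\cite{BB06} or~\cite{K18}.

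Your AM--GM step accomplishes more in one stroke: it decouples $m$ and $n$ \emph{and} collapses the inequality to the exact equation $y_1^2-y_2^2=y_3^2-y_4^2$, so that after the rearrangement $y_1^2+y_4^2=y_2^2+y_3^2$ only the classical $l=0$ estimate $\sum_{k\le X} r(k)^2\ll X\log X$ is needed. No case split on $U$ is required, and the shifted-convolution literature is bypassed entirely. The paper's route has the merit of making explicit the link with the shifted sums $\sum_l\sum_m r(m)r(m+l)$ emphasised in the introduction, and would be the natural path if one wanted secondary terms; your route is simply shorter and more elementary for the bound as stated.
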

\begin{proof}
To prove the lemma, we may drop the constraint that $y_i$ are prime. Suppose first that $U \ll N$. Fixing $y_2,y_3$ and $y_4$ we find that $x_1$ is constrained by the inequality 
$$
|y_1^2-y_2^2-\alpha_3(y_3^2-y_4^2)| \ll N^2/U,$$ 
which forces $y_1^2$ to lie in an interval of length $\ll N^2/U$. As a result, we get that 
$$
I_4(U) \ll UN^3(N/U) \ll N^4,
$$
which is sufficient. So we may suppose for the rest of the argument that $U \gg N$. Since $\widetilde{\omega}$ is non-negative, it suffices to show that 
$$
\#\left\{y_i \sim N : |y_1^2-y_2^2-\alpha_2(y_3^2-y_4^2)|\ll N^2/U\right\} \ll N^2(1+N^2/U)(\log N).
$$
We will now proceed as in the proof of~\cite[Lemma 7]{TB03}. We are grateful to Tim Browning for suggesting its use in the proof of this lemma. For $\beta < N$ put 
$$
N(\beta,N) = \#\left\{\y \in \ZZ^4 \cap [N,2N]^4 : |y_1^2-y_2^2-\alpha_2(y_3^2-y_4^2)|\leq \beta \right\}
$$
and 
$$
N(n;1,2) = \#\left\{\y \in \ZZ^2 \cap [N,2N]^2 : n\beta \leq y_1^2-y_2^2 \leq (n+3)\beta\right\}.
$$
Defining $N(n;3,4)$ similarly, we have that
\begin{align*}
N(\beta,N) &\leq \sum_{n=1}^{\infty}N(n;1,2)\left\{N(-(n+4);3,4)+N(-(n+1);3,4)\right\} \\
&\ll \left(\sum_{n=1}^{\infty}N(n;1,2)^2\right)^{1/2}\left(\sum_{n'=1}^{\infty}N(-n';3,4)^2\right)^{1/2},
\end{align*}
by the Cauchy-Schwarz inequality. Thus setting 
$$
M(\beta,N) = \#\left\{N \leq y_1,y_2,z_1,z_2 \leq 2N : |(y_1^2-y_2^2-(z_1^2-z_2^2)|\right\} \leq 3\beta
$$
we find that 
$$
N(\beta,N) \ll M(\beta,N)^{1/2}M(\beta/\alpha_2,N)^{1/2}.
$$
Let $F(\x) = x_1^2-x_2^2-x_3^2+x_4^2$. Then we have
\begin{align*}
M(\beta,N) &= \sum_{\substack{m \in \ZZ \\ |l| \leq 3\beta}}\sum_{\substack{x_i \sim N \\ F(\x) = l}}1 \leq \sum_{\substack{0 \leq l \leq 3\beta}}\sum_{m \leq 8N^2}r(m)r(m+l),
\end{align*}
where $r(m)$ is the number of ways of writing an integer as the sum of two squares. The lemma now follows by adapting the proof of~\cite[Theorem 1]{BB06}. 

For a more direct proof, we may appeal to~\cite[Theorem 1.4]{K18} to get
\begin{align*}
M(\beta,N) &\ll N^2\log N + \sum_{1 \leq l \leq 3\beta}d(l)N^2 + O\left((1+\beta)N^{19/20+\ve}(1+\beta/N^2)^{3/20+\ve}\right)\\
&\ll N^2\log N + (1+\beta)\log (2+\beta)N^2,
\end{align*}
since $\beta < N$, by assumption. The lemma follows upon setting $\beta = N^2/U$.
\end{proof}

\subsection{Proof of Proposition~\ref{propprime}}
Using a zero-free region for the Riemann zeta function, we will now prove an upper bound for $F_2(t)$. Proposition~\ref{propprime} will then follow from~\eqref{eq:pf2i4}, Lemma~\ref{lemmadiop} and the following lemma since
$$
F_2(t) = \sum_{n=1}^{\infty}\omega_3(n/N)\Lambda(n)n^{it} + O(N^{1/2}\log N),
$$
where $\Lambda(n)$ denotes the von Mangoldt function. 
\begin{lemma}
Let $w(x)$ be a smooth function with support in $[1/2,2]$. Suppose that $N^{1/10} \leq |t| \leq N^3.$ Then there exists a real number $c > 0$ such that
\begin{equation*}
\sum_{n=1}^{\infty} w(n/N)\Lambda(n)n^{it} \ll N\exp\left(-c(\log N)(\log |t|)^{-2/3}(\log \log |t|)^{-\frac{1}{3}}\right). 
\end{equation*}
\end{lemma}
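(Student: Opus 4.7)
The plan is to apply Mellin inversion to turn the sum into a contour integral involving $-\zeta'/\zeta$, shift the contour into the Vinogradov--Korobov zero-free region, and exploit rapid decay of the Mellin transform of $w$ to dispatch the pole. Let $\widetilde{w}(s) = \int_0^\infty w(x) x^{s-1}\, dx$. Since $w$ is smooth with support in $[1/2,2]$, repeated integration by parts yields $\widetilde{w}(\sigma + i\tau) \ll_{A,\sigma} (1+|\tau|)^{-A}$ for every $A>0$, uniformly on compact $\sigma$-intervals. For $\sigma_0 = 1 + 1/\log N$, the Dirichlet series identity $\sum_n \Lambda(n) n^{-(s-it)} = -\zeta'/\zeta(s-it)$ combined with the inverse Mellin formula gives
$$
\sum_{n \geq 1} w(n/N)\Lambda(n) n^{it} = \frac{1}{2\pi i}\int_{(\sigma_0)} \widetilde{w}(s)\, N^s \left(-\frac{\zeta'}{\zeta}(s-it)\right) ds.
$$

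I would then shift the contour to $\Re s = \sigma_1 := 1 - \eta$, where $\eta = c_0 (\log |t|)^{-2/3} (\log \log |t|)^{-1/3}$ and $c_0 > 0$ is small enough that the Vinogradov--Korobov theorem supplies the zero-free region $\zeta(w) \neq 0$ for $\Re w \geq 1 - 2\eta$ and $|\Im w| \gg 1$, together with the companion estimate $-\zeta'/\zeta(w) \ll (\log |\Im w|)^{2/3}(\log\log |\Im w|)^{1/3}$ throughout that region. The only pole crossed is the simple pole of $-\zeta'/\zeta(s-it)$ at $s = 1+it$, of residue $1$, contributing $\widetilde{w}(1+it) N^{1+it}$. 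By rapid decay of $\widetilde{w}$ and the hypothesis $|t| \geq N^{1/10}$, this residue is $\ll_A N (1+|t|)^{-A}$, hence negligible for $A$ taken large.

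On the shifted contour I would truncate to $|\Im s| \leq (\log N)^2$; both the tail and the horizontal connecting segments contribute at most $O(N^{-10})$ by rapid decay of $\widetilde{w}$. Within the truncation $|\Im(s-it)| \asymp |t|$ (using again $|t| \geq N^{1/10}$), so the Vinogradov--Korobov estimate gives $-\zeta'/\zeta(s-it) \ll (\log|t|)^{2/3}(\log\log|t|)^{1/3}$. Combining with
$$
N^{\sigma_1} = N \exp\!\bigl(-c_0 (\log N)(\log|t|)^{-2/3}(\log\log|t|)^{-1/3}\bigr),
$$
and absorbing the polylogarithmic losses by replacing $c_0$ with any slightly smaller positive constant $c$, yields the claimed bound. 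The essential (and only non-routine) input is invoking the correct statement of the Vinogradov--Korobov zero-free region together with its companion bound on $-\zeta'/\zeta$; the rest is standard Perron-style bookkeeping made painless by the rapid decay of $\widetilde{w}$.
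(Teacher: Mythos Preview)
Your proposal is correct and follows essentially the same route as the paper: Mellin inversion, a contour shift into the Vinogradov--Korobov zero-free region (the paper cites \cite[Theorem 8.29]{IK04}), and rapid decay of $\widetilde{w}$ to dispose of the pole at $s=1+it$ and the tails. The only organisational difference is that the paper truncates first (to height $Y=|t|^{1/10}$) and then shifts the finite segment, which sidesteps the issue that the full line $\Re s=\sigma_1$ could meet zeros of $\zeta$ at heights $\gg|t|$ where the zero-free region is narrower than $\eta$; swapping your order of operations in this way makes the bookkeeping clean.
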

\begin{proof}
By Mellin inversion we have 
$$
\sum_{n=1}^{\infty} w(n/N)\Lambda(n)n^{it} = -\frac{1}{2\pi i}\int_{(\sigma_0)}\widetilde{w}(s)N^s\frac{\zeta'}{\zeta}(s-it)\, ds
$$
for $\sigma_0 = 1 + 1/\log N.$ Let $Y = |t|^{1/10}.$ Since $\widetilde{w}(s) \ll_A (1+|s|)^{-A}$ for $0 < \Re (s) < 2$, we have
$$
\int_{\sigma_0 \pm iY}^{\sigma_0 \pm i \infty} \widetilde{w}(s)N^s\frac{\zeta'}{\zeta}(s-it)\, ds \ll_A \frac{N(\log N)^2}{Y^A}.
$$
Thus we get for any $A > 0$ that
$$
\sum_{n=1}^{\infty} w(n/N)\Lambda(n)n^{it} = -\frac{1}{2\pi i}\int_{\sigma_0-iY}^{\sigma_0+iY}\widetilde{w}(s)N^s\frac{\zeta'}{\zeta}(s-it)\, ds + O_A\left(\frac{N(\log N)^2}{Y^A}\right).
$$
We will now move the line of integration to 
$$
\sigma_1 = 1 - c(\log 2|t|)^{-\frac{2}{3}}(\log \log 2|t|))^{-\frac{1}{3}}
$$ for $c > 0$ so that we may apply~\cite[Theorem 8.29]{IK04} to ensure that $\zeta(s-it) \neq 0$ for $\Re(s) \geq \sigma_1$ and $\Im(s) \leq Y$. In this process, we pick up a pole at $s = 1+it$ to get
\begin{align*}
\sum_{n=1}^{\infty} w(n/N)\Lambda(n)n^{it}  &= \widetilde{w}(1+it)N^{1+it}  - \frac{1}{2\pi i} \int_{\sigma_0 - iY}^{\sigma_1 - iY} \widetilde{w}(s)N^s\frac{\zeta'}{\zeta}(s-it)\, ds\\
&\quad \quad + \frac{1}{2\pi i}\int_{\sigma_0 + iY}^{\sigma_1 + iY} \widetilde{w}(s)N^s\frac{\zeta'}{\zeta}(s-it)\, ds \\
&\quad \quad  + \frac{1}{2\pi i}\int_{\sigma_1 - iY}^{\sigma_1 +iY} \widetilde{w}(s)N^s\frac{\zeta'}{\zeta}(s-it)\, ds.
\end{align*}
Since $|t| \geq N^{1/10}$ by hypothesis, we see that the first term is bounded by $O_A(N|t|^{-A}) = O(N^{1-A/10}),$ for any $A > 0$. Using~\cite[Theorem 8.29]{IK04} to bound $\frac{\zeta'}{\zeta}(s-it)$ we get that 
\begin{align*}
\int_{\sigma_0 \pm iY}^{\sigma_1 \pm iY} \widetilde{w}(s)N^s\frac{\zeta'}{\zeta}(s-it)\, ds &\ll (\sigma_0 - \sigma_1)\frac{N}{Y^A}(\log |t|)^{\frac{2}{3}}(\log \log |t|)^{\frac{1}{3}} \\
&\ll \frac{N(\log N)^2}{Y^A},
\end{align*}
since $|t| \leq N^3$, by assumption. Finally, using the bound $\widetilde{w}(s) \ll |s|^{-1}$ we get
\begin{align*}
\int_{\sigma_1 - iY}^{\sigma_1 +iY} \widetilde{w}(s)N^s\frac{\zeta'}{\zeta}(s-it)\, ds &\ll N^{1-c(\log 2|t|)^{-\frac{2}{3}}(\log \log 2|t|)^{-\frac{1}{3}}} \times \\
&(\log |t|)^{\frac{2}{3}}(\log \log |t|)^{\frac{1}{3}}\int_{-Y}^Y\frac{dy}{\sqrt{1+|y|^2}}. 
\end{align*}
Putting together all the estimates completes the proof of the lemma.
\end{proof}

\section{Quantitative Oppenheim conjecture for inhomogeneous diagonal forms}

This section is devoted to the proof of Theorem~\ref{thmshift}, which will follow from the following results via an application of the Borel-Cantelli lemma.
\begin{proposition}\label{propshift}
Fix $\alpha_2 > 0$, let $k \geq 2$ be an integer. Let $\bs{\theta} \in \RR^3.$ Let $N$ be a large real parameter. Assume that $\delta > N^{k-3}$. \begin{enumerate}
\item Suppose that the Exponent Pair conjecture is true, then the inequality 
$$|(x_1+\theta_1)^k -\alpha_2 (x_2+\theta_2)^k -\alpha_3(x_3+\theta_3)^k| < \delta$$
has a non-trivial solution with $|\x| \sim N$ for all $\alpha_3 \in [1/2,1]$ except for a set of Lebesgue measure at most 
$$
\ll N^{-1+\ve} + N^{k-3+\ve}\delta^{-1}.$$
\item The statement above holds unconditionally except for a set of measure at most
$$
\ll N^{-1+k/3+\ve}\delta^{-13} + N^{4k/3-3+\ve}\delta^{-4/3}.
$$
\end{enumerate}
\end{proposition}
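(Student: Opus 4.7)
The proof will follow the same template as Proposition~\ref{propprime}, combined with the strategy of Bourgain~\cite{Bourgain16} and Schindler~\cite{Schindler20}, with the additional feature of inhomogeneous shifts. Set $T = 2N^k/(c_0\delta)$ and study the smoothed counting function
\[
S_1(\alpha_3) = \sum_{\x \in \ZZ^3} \omega(\x/N)\,\bs{1}_{|F(\x+\bs{\theta})|<\delta}.
\]
Using $|\log m - \log n| \geq |m-n|/(m+n)$, it suffices to lower bound the variant in which the inequality is replaced by the logarithmic condition $|\log((x_1+\theta_1)^k - \alpha_2(x_2+\theta_2)^k) - \log(\alpha_3(x_3+\theta_3)^k)| < c_0 \delta N^{-k}$. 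Introduce the exponential sums
\[
G_1(t) = \sum_{x_1,x_2 \in \ZZ} \omega_1(x_1/N)\omega_2(x_2/N)\,e^{it\log((x_1+\theta_1)^k - \alpha_2(x_2+\theta_2)^k)},
\]
\[
G_2(t) = \sum_{x_3 \in \ZZ} \omega_3(x_3/N)\,(x_3+\theta_3)^{it},
\]
and, following~\cite{Schindler20}, replace the logarithmic indicator by the Fourier integral
\[
S_2(\alpha_3) = \frac{1}{T}\int_{-\infty}^{\infty} \widehat{\omega_0}(t/T)\,G_1(t)\,G_2(-kt)\,\alpha_3^{-it}\,dt, \qquad S_1(\alpha_3)\geq S_2(\alpha_3).
\]
Split $\widehat{\omega_0}(t/T) = \widehat{\omega_0}(t/N^{1-\theta}) + \bigl(\widehat{\omega_0}(t/T) - \widehat{\omega_0}(t/N^{1-\theta})\bigr)$ for a suitable $\theta > 0$, and write $S_2 = S_3 + S_4$ accordingly.

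For the lower bound on $S_3$ one reduces, exactly as in the proof of Proposition~\ref{propprime}, to counting $\x \in \ZZ^3 \cap [N,2N]^3$ satisfying the widened inequality $|F(\x+\bs{\theta})| \ll N^{k-1+\theta}$. The inner sum over $x_1$ (for $x_2,x_3$ fixed) counts integers in an interval of length $\gg N^{\theta}$ and so contributes $\gg N^{\theta}$; this is the simplification over the prime case. Summing produces $S_3(\alpha_3) \gg \delta N^{3-k}$ uniformly for $\alpha_3 \in [1/2,1]$, and this main term is non-trivial by the hypothesis $\delta > N^{k-3}$.

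For the exceptional set, Chebyshev and Parseval yield
\[
\meas\{\alpha_3 \in [1/2,1] : |S_4(\alpha_3)| \gg \delta N^{3-k}\} \ll \frac{N^{2k-6}}{\delta^2 T^2}\int_{-\infty}^{\infty} \Phi(t)^2\,|G_1(t)|^2\,|G_2(-kt)|^2\,dt,
\]
where $\Phi(t) = \widehat{\omega_0}(t/T) - \widehat{\omega_0}(t/N^{1-\theta}) \ll \min\bigl(1,\,t^2/N^{2-2\theta},\,(T/|t|)^{10}\bigr)$. The sum $G_2$ is an exponential sum with phase $f(x) = -kt\log(x+\theta_3)$ satisfying $|f^{(j)}(x)| \asymp |t|N^{-j}$, so an exponent pair $(p,q)$ gives $|G_2(-kt)| \ll |t|^p N^{q-p+1/2+\ve}$: the pair $(0,0)$ yields $N^{1/2+\ve}$ under the Exponent Pair Conjecture, while a concrete known pair is used in the unconditional case. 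For $G_1$, expanding the square and integrating over $|t| \leq T_0$ produces (after a standard dyadic decomposition of $|\log G(\x) - \log G(\y)|$) a count of quadruples governed by Theorem~\ref{prop1} with $\alpha=k$ and $\beta=-\alpha_2$, giving
\[
\int_{-T_0}^{T_0}|G_1(t)|^2\,dt \ll (T_0 N^{2} + N^{4})N^{\ve}.
\]
Inserting these bounds and splitting the $t$-integral into $|t| \leq N^{1/10}$ (trivial estimates), $N^{1/10} \leq |t| \leq N^{1-\theta}$ (exploit the $t^2/N^{2-2\theta}$ decay of $\Phi$), and $N^{1-\theta} \leq |t| \leq T$ (use the $G_2$ bound together with the Theorem~\ref{prop1} estimate) gives the two exceptional measure bounds after optimising $\theta$.

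The main obstacle lies in the unconditional case: the exponent pair applied to $G_2$ leaves a residual factor $|t|^p$ with $p>0$, so the $t$-integral becomes sensitive to the upper endpoint $T = N^k/\delta$, and the two error terms $T_0 N^2$ and $N^4$ in Theorem~\ref{prop1} must be balanced carefully against the $G_2$-bound to produce the correct dependence on $\delta$ in the exceptional measure. The presence of the shifts $\theta_i$ is precisely what forces the use of the inhomogeneous Robert--Sargos-type estimate of Theorem~\ref{prop1}, since the usual divisor-type analysis of $|G(\x) - G(\y)|$ in the homogeneous case is no longer available.
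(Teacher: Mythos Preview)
Your proposal is correct and follows essentially the same route as the paper: the Bourgain--Schindler framework, the lower bound $S_3(\alpha_3)\gg \delta N^{3-k}$, Chebyshev/Parseval for the exceptional set, Theorem~\ref{prop1} to control the $G_1$-integral (this is the paper's Lemma~\ref{lemmaDiophantine}), and exponent pairs for $G_2$ (the paper's Lemma~\ref{lemmalindelof}, with the pair $(1/6,1/6)$ in the unconditional case).

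The one technical point where the paper differs from your sketch is the treatment of $G_2$. You propose to apply an exponent pair directly to $\sum_{n\sim N}(n+\theta_3)^{it}$, but the exponent-pair hypothesis requires $F\geq N$, i.e.\ $|t|\geq N$, so the range $N^{1/10}\leq |t|\leq N$ is not covered by that argument as stated. The paper instead passes via Mellin inversion to the Hurwitz zeta function $\zeta(s,\theta)$, moves the contour to $\Re(s)=1/2$, and then applies the exponent pair to the partial sums $\sum_{K\leq n\leq 2K}(n+\theta)^{-it}$ with $K\leq |t|$; this gives the uniform bound $G_2(t)\ll N^{1/2+\ve}(1+|t|)^{1/6+\ve}$ (or $(1+|t|)^{\ve}$ under the conjecture) across the full range $|t|\geq N^{1/10}$. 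This is a clean device you should incorporate; the rest of your outline then goes through unchanged.
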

\begin{proposition}\label{12/5bound}
Fix $\alpha_2 > 0$. Let $k \geq 2$ and let $\bs{\theta} \in \RR^3$ be a fixed vector. Suppose that $N^{k-3} < \delta < N^{k-2}$. Then the inequality
$$
|(x_1+\theta_1)^k - \alpha_2(x_2+\theta_2)^k -\alpha_3 (x_3+\theta_3)^k| < \delta
$$
has a non-trivial solution with $|\x| \sim N$ for all $\alpha_3 \in [1/2,1]$ with the exception of a set of $\alpha_3$ of measure at most
$$
\ll N^{5k/6-2+\ve}\delta^{-5/6} + N^{10k/9-8/3+\ve}\delta^{-10/9}.
$$
\end{proposition}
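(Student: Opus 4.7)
The plan is to adapt the strategy of Proposition~\ref{propprime} (modelled on~\cite{Bourgain16, Schindler20}), with three key substitutions: the quadratic form is replaced by a diagonal $k$-th power form with inhomogeneous shifts, the primality constraint is dropped, and the Vinogradov--Korobov bound on $F_2(t)$ is replaced by pointwise exponent-pair estimates combined with an $L^2$ mean value. Setting $T = cN^k/\delta$ and introducing
$$
F_1(t) = \sum_{x_1,x_2}\omega_1(x_1/N)\omega_2(x_2/N)\exp\bigl(it\log((x_1+\theta_1)^k-\alpha_2(x_2+\theta_2)^k)\bigr),
$$
$$
F_2(t) = \sum_{x_3}\omega_3(x_3/N)(x_3+\theta_3)^{ikt},
$$
we form $S_2(\alpha_3) = T^{-1}\int\widehat{\omega_0}(t/T)F_1(t)F_2(-t)\alpha_3^{-it}\,dt$, which bounds from below the smoothed count of solutions $|\x|\sim N$ to $|F(\x+\bs{\theta})|<\delta$, and split $S_2 = S_3 + S_4$ by writing $\widehat{\omega_0}(t/T) = \widehat{\omega_0}(t/U_0) + (\widehat{\omega_0}(t/T)-\widehat{\omega_0}(t/U_0))$ for an auxiliary parameter $U_0 = N^{\beta}$, $\beta\in(0,1)$, to be optimized. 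A direct lattice-point argument, using the mean value theorem applied to $x_3\mapsto(x_3+\theta_3)^k$ and without requiring any short-interval input for primes, gives $S_3(\alpha_3) \gg \delta N^{3-k}$ for every $\alpha_3\in[1/2,1]$.

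It remains to bound the measure of $\alpha_3$ on which $|S_4(\alpha_3)|\gg \delta N^{3-k}$. Chebyshev and Plancherel yield
$$
\int_{1/2}^1|S_4(\alpha_3)|^2\,d\alpha_3 \ll T^{-2}\int W(t)|F_1(t)|^2|F_2(t)|^2\,dt,\qquad W(t) = \min\bigl(1,(|t|/U_0)^{4},(T/|t|)^{A}\bigr).
$$
Decomposing dyadically in $|t|\sim U$, and expanding the square in $|F_1|^2$, an application of Theorem~\ref{prop1} with $\alpha = k$, $M = N$, shifts $\theta_1,\theta_2$, and $\delta' = 1/U$ gives $\int_{|t|\sim U}|F_1(t)|^2\,dt \ll UN^{2+\ve}+N^{4+\ve}$. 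For $F_2$ we use two ingredients: a pointwise exponent-pair bound $|F_2(t)|\ll|t|^{p}N^{q-p+1/2+\ve}$ for a suitable pair $(p,q)$ (valid even in the presence of the shift $\theta_3$, since $(n+\theta_3)^{ikt}$ has the same van der Corput derivative scaling as $n^{ikt}$), and the $L^2$ mean value
$$
\int_U^{2U}|F_2(t)|^2\,dt \ll UN^{1+\ve}+N^{2+\ve},
$$
obtained via a Montgomery--Vaughan-type inequality after factoring $(n+\theta_3)^{ikt} = n^{ikt}(1+\theta_3/n)^{ikt}$. Interpolating between these two via H\"older's inequality, and optimizing over $U$ and the splitting parameter $\beta$, produces the two contributions $N^{5k/6-2+\ve}\delta^{-5/6}$ and $N^{10k/9-8/3+\ve}\delta^{-10/9}$ in the final measure bound.

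The main obstacle is controlling the inhomogeneity throughout. Theorem~\ref{prop1} was designed precisely to handle the shifts $\theta_1,\theta_2$ in the 4-variable count that arises when opening the square in $|F_1|^2$, and the shift $\theta_3$ only enters $F_2$ through a multiplicative factor $(1+\theta_3/n)^{ikt}$ whose derivatives are of lower order on $n\sim N$. The delicate numerical step is the H\"older interpolation between the pointwise and mean-square estimates for $F_2$: balancing these so that the optimization in $U$ and $\beta$ yields precisely the exponents $5/6$ and $10/9$ is the heart of the $12/5$-exponent numerology, and it is this ingredient, rather than any new bound for $F_1$, that distinguishes Proposition~\ref{12/5bound} from part (2) of Proposition~\ref{propshift}.
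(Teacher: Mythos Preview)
Your overall framework is correct and matches the paper's own route: the paper's proof simply says to follow Schindler's Proposition~4.4 (whose engine is Bourgain's \S3), replacing Schindler's homogeneous Robert--Sargos input by the inhomogeneous Lemma~\ref{lemmaDiophantine} (i.e.\ Theorem~\ref{prop1}). Your identification of Theorem~\ref{prop1} as the correct substitute for the $|F_1|^2$-counting, and of exponent-pair bounds on $F_2$ as being insensitive to the shift $\theta_3$, is exactly what the paper uses.

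There is, however, a genuine gap in your description of the $F_2$-step. The assertion that ``H\"older interpolation between the pointwise exponent-pair bound and the $L^2$ mean value for $F_2$'' already produces the exponents $5/6$ and $10/9$ is not correct. With only the inputs you list --- $\sup_{|t|\sim U}|F_2|\ll N^{1/2}U^{1/6}$, $\int_{|t|\sim U}|F_2|^2\ll UN+N^2$, $\int_{|t|\sim U}|F_1|^2\ll UN^{2+\ve}+N^{4+\ve}$, and the trivial $\sup|F_1|\ll N^2$ --- any legitimate H\"older combination yields at best $\int_{|t|\sim U}|F_1F_2|^2 \ll N^{3+2\gamma}U^{(4-\gamma)/3}$ for some $\gamma\in[0,1]$, and no choice of $\gamma$ gives $N^4U^{5/6}$ or $N^{10/3}U^{10/9}$. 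In particular $\gamma=0$ just reproduces the bound behind Proposition~\ref{propshift}(2), not the sharper Proposition~\ref{12/5bound}. The actual Bourgain--Schindler mechanism is a level-set argument: one decomposes according to $|F_2(t)|\sim\mu$, controls the measure of each level set via a \emph{fourth} moment of $F_2$ (not the second), and crucially invokes a bound of the form $\int_{\Omega}|F_1|^2\ll (|\Omega|+N^2)N^{2+\ve}$ valid for \emph{arbitrary} measurable $\Omega$ (this is Schindler's Lemma~4.1, and is strictly stronger than your interval bound from Theorem~\ref{prop1}). It is the interplay of these two refinements that produces $5/6$ and $10/9$; neither appears in your sketch. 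A smaller point: your ``Montgomery--Vaughan after factoring $(n+\theta_3)^{ikt}=n^{ikt}(1+\theta_3/n)^{ikt}$'' does not work as written, since the coefficients become $t$-dependent; the $L^2$ bound is nonetheless true by direct expansion, but as noted it is the $L^4$ bound that is actually needed.
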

\subsection{Reduction to a counting problem}
Let $\omega_0(x)$ and $\omega(\x) = \prod_{i=1}^3\omega_i(x_i)$ be smooth functions as in~\cite[Section 2]{Schindler20}. Proceeding as in~\cite{Schindler20}, we see that to prove Theorem~\ref{thmshift}, it will suffice to obtain a lower bound for the sum
\begin{equation*}
\sum_{\x \in \ZZ^3}\omega(\x/N)\bs{1}_{[|F(\x+\bs{\theta})|<\delta]},
\end{equation*}
which will follow from a lower bound for the sum
\begin{equation*}
S_1(\alpha_3) = \sum_{\x \in \ZZ^3}\omega(\x/N)\bs{1}_{[|\log((x_1+\theta_1)^k - \alpha_2(x_2+\theta_2)^k)-\log (\alpha_3(x_3+\theta_3)^k)| < c_0\delta/N^{k}]},
\end{equation*}
where $c_0$ is a constant that depends only on $\bs{\theta}$, $\omega(\x)$ and $\alpha_2$. Let 
$$F_1(t) = \sum_{\x \in \ZZ^2}\omega_1(x_1/N)\omega_2(x_2/N)e^{it\log((x_1+\theta_1)^k - \alpha_2(x_2+\theta_2)^k)}$$ 
and 
$$F_2(t) = \sum_{x_3 \in \ZZ}\omega_3(x_3/N)e^{it\log (x_3+\theta_3)}.$$
Set $T = \frac{2N^k}{c_0\delta}$ and define
\begin{equation*}
S_2(\alpha_3) = \frac{1}{T}\int_{-\infty}^{\infty}\widehat{\omega_0}(t/T)F_1(t)F_2(-kt)e^{-it\log \alpha_3}\, dt.
\end{equation*} 
Then we have $S_1(\alpha_3) \geq S_2(\alpha_3)$. Write $S_2(\alpha_3) = S_3(\alpha_3) + S_4(\alpha_3),$ where $S_3$ and $S_4$ are as in~\cite[Equations (2.4), (2.5)]{Schindler20}. A straightforward adaptation of~\cite[Lemma 2.1]{Schindler20} shows that for any $\alpha_3 \in [1/2,1]$, we have the lower bound
\begin{equation}\label{eq:s3alpha3}
S_3(\alpha_3) \gg N^{3-k}\delta,
\end{equation}
where the implied constant depends only on $\alpha_2$ and the function $\omega(\x)$. Our task for the rest of the section will be to get an upper bound for $S_4(\alpha_3)$.

If $c_2$ is the implicit constant in~\eqref{eq:s3alpha3}, by Chebyshev's inequality we find that
$$\meas \left\{\alpha_3 \in [1/2,1] : |S_4(\alpha_3)| \geq \frac{c_2}{2}N^{3-k}\delta\right\} 
\leq 
4c_2^{-2}\delta^{-2}N^{2k-6}\int_{1/2}^1|S_4(\alpha_3)|^2\, d\alpha_3.$$
Let 
$$\widetilde{\omega}(y_1,y_2,y_3,y_4) = \omega_1(y_1)\omega_2(y_2)\omega_1(y_3)\omega_2(y_4)$$ 
and define
$$I_4(U) = U\sum_{\y \in \ZZ^4}\widetilde{\omega}(\y/N)1_{[|(y_1+\theta_1)^k-\alpha_2(y_2+\theta_2)^k-((y_3+\theta_1)^k-\alpha_2(y_4+\theta_2)^k)|\ll N^k/U]}.$$
Then by~\cite[Equations (2.7)--(2.9) \& Lemma 2.2]{Schindler20} we see that there exists a constant $c_4$ such that
\begin{equation}\label{eq:exmeasurelindelof}
\meas \left\{\alpha_3 \in [1/2,1] : |S_4(\alpha_3)| \geq (c_2/2)N^{3-k}\delta\right\} \leq c_4N^{-3/2} + I_2,
\end{equation}
where
\begin{equation}\label{eq:f2i4}
I_2 \leq N^{-6}\log N \max_{|t| \geq N^{1/10}}\left(\min\left(1,\left(\frac{T}{|t|}\right)\right)|F_2(t)|\right)^2\sup_{1/\log N \ll U \ll T}I_4(U).
\end{equation}

\begin{lemma}\label{lemmaDiophantine}
\begin{equation*}
I_4(U) \ll N^{4+\ve} + N^{k+2+\ve}/\delta.
\end{equation*}
\end{lemma}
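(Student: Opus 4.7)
The plan is to deduce the lemma as a direct application of Theorem~\ref{prop1}, after rewriting the inequality defining $I_4(U)$ in the correct shape. First I would bound the smooth weight $\widetilde{\omega}(\y/N)$ by a constant multiple of the indicator of $[N/2,2N]^4$, so that it suffices to estimate the number of $\y \in \ZZ^4 \cap [N/2, 2N]^4$ satisfying
\[
\bigl|(y_1+\theta_1)^k - (y_3+\theta_1)^k - \alpha_2\bigl((y_2+\theta_2)^k - (y_4+\theta_2)^k\bigr)\bigr| \ll N^k/U.
\]

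Next I would match this with the setup of Theorem~\ref{prop1}. Relabelling $(m_1,m_2,m_3,m_4) = (y_1,y_3,y_2,y_4)$, with parameters $\alpha = k$ (which is $\neq 0, 1$ since $k \geq 2$), $\beta = -\alpha_2$, and the same shifts $\theta_1,\theta_2$ as in the problem, the inequality above is exactly of the form \eqref{eq:nmaddef} with $M \asymp N$ and $\delta_{\text{Thm}} \asymp 1/U$. The constraint $\delta_{\text{Thm}} < 1$ in Theorem~\ref{prop1} is harmless: since $U \gg 1/\log N$, if $1/U \geq 1$ then the required bound is trivial (the number of quadruples is $\ll N^4$, which is absorbed into $N^{4+\ve}$). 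Applying Theorem~\ref{prop1} therefore yields
\[
\#\bigl\{\y \in \ZZ^4 \cap [N/2, 2N]^4 : \text{inequality holds}\bigr\} \ll N^{2+\ve} + \frac{1}{U} N^{4+\ve}.
\]

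Multiplying by the prefactor $U$ in the definition of $I_4(U)$ gives
\[
I_4(U) \ll U N^{2+\ve} + N^{4+\ve}.
\]
Finally, since by hypothesis $U \ll T = 2N^k/(c_0\delta)$, we have $UN^{2+\ve} \ll N^{k+2+\ve}/\delta$, which combined with the second term yields the claimed bound $I_4(U) \ll N^{4+\ve} + N^{k+2+\ve}/\delta$.

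There is no real obstacle here beyond ensuring the notational matching is correct: Theorem~\ref{prop1} has already done the hard work of handling the shifts $\theta_1, \theta_2$. The only genuine care is in verifying that the two pairs $(y_1, y_3)$ and $(y_2, y_4)$ share the same shift within each pair, which they do by construction of $I_4(U)$. Thus the lemma reduces to a clean invocation of Theorem~\ref{prop1} with $\alpha = k$ and $\delta$-parameter $\asymp 1/U$.
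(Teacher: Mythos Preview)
Your proposal is correct and follows essentially the same route as the paper: both dispose of the case $U \leq 1$ trivially, then invoke Theorem~\ref{prop1} with $\alpha=k$ and $\delta$-parameter $\asymp 1/U$ to obtain $I_4(U)\ll U N^{2+\ve}+N^{4+\ve}$, and finish using $U\ll T=2N^k/(c_0\delta)$. Your write-up is simply a bit more explicit about the parameter matching and the smooth-weight bound than the paper's one-line application.
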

\begin{proof}
If $U \leq 1$, then the result is trivial. So we may suppose that $U > 1.$ In this case, a direct application of Theorem~\ref{prop1} yields 
$$I_4(U) \ll U\left(N^{2+\ve} + N^{4+\ve}/U\right).$$
Since $U \ll T = 2N^{k}/c_0\delta$, we see that the lemma holds.
\end{proof}

\subsection{Proof of the main propositions}
We can now complete the proof of Propositions~\ref{propshift} and~\ref{12/5bound}. We will need the following estimate for $F_2(t)$.
\begin{lemma}\label{lemmalindelof}
Let $w(x)$ be a smooth function with support in $[1/4,2]$. For $\theta$ a real number define 
$$S(N) = \sum_{n \in \ZZ}w(n/N)(n+\theta)^{it}.$$
Suppose that $|t| \geq N^{\frac{1}{10}}$. Suppose the exponent pair conjecture holds. Then for any $\ve > 0$ we have
$$S(N) \ll_{\theta,\ve} N^{\frac{1}{2}+\ve}(1+|t|)^{\ve}.$$
Unconditionally, we have
$$
S(N) \ll_{\theta,\ve} N^{\frac{1}{2}+\ve}(1+|t|)^{\frac{1}{6}+\ve}.
$$
\end{lemma}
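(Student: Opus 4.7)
The plan is to apply the theory of exponent pairs to the exponential sum obtained by writing
\[
S(N) = \sum_{n \in \ZZ} w(n/N)\, e(f(n)), \qquad f(x) := \frac{t}{2\pi}\log(x+\theta).
\]
A direct computation gives
\[
f^{(j)}(x) = \frac{(-1)^{j-1}(j-1)!\, t}{2\pi(x+\theta)^j}, \qquad j \geq 1,
\]
so that $|f^{(j)}(x)| \asymp_{j,\theta} |t|\,N^{-j}$ uniformly for $x$ in the support of $w(\cdot/N) \subset [N/4, 2N]$, once $N$ is large enough relative to $|\theta|$. Thus $f$ satisfies the derivative hypotheses underlying the exponent-pair framework with $F \asymp |t|$, and the assumption $|t| \geq N^{1/10}$ guarantees that any factor of the form $F^{\ve}$ comfortably absorbs the logarithmic losses coming from the precise size of the phase.

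With this set-up, the conditional bound is immediate: assuming the exponent pair conjecture, so that $(0,0)$ is an exponent pair, the exponent pair estimate yields $S(N) \ll N^{1/2+\ve}(1+|t|)^{\ve}$. For the unconditional bound, I would apply the classical exponent pair $(1/6, 2/3) = AB(0,1)$, obtained from the trivial pair by the $B$-process (Poisson summation plus stationary phase) followed by the $A$-process (Weyl differencing combined with Cauchy--Schwarz); this gives $S(N) \ll N^{1/2+\ve}(1+|t|)^{1/6+\ve}$, as required.

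Two technical points deserve mention. First, the smooth weight $w$ is handled either via partial summation from a sharp-cutoff exponent pair bound, or by invoking the smooth-cutoff variant of the theorem directly; both routes give the same exponents. Second, the shift by $\theta$ is harmless: since $|\theta|$ is fixed, it only perturbs $x+\theta$ by $O(1)$ relative to $x \sim N$, and is absorbed into the implied constants in the derivative asymptotics above.

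The main obstacle, and really the only delicate point, is ensuring that the exponent pair bound $|t|^{1/6} N^{1/2+\ve}$ remains non-trivial and correctly applies throughout the full range $|t| \geq N^{1/10}$, rather than merely in the more familiar regime $|t| \geq N$. The hypothesis $|t| \geq N^{1/10}$ is precisely tailored so that $|t|^{1/6} N^{1/2+\ve}$ beats the trivial estimate $N$ (with room to spare), and the $N^\ve$ slack on both factors ensures any polylogarithmic terms arising from transitioning between sharp and smooth weights or from the $\theta$-dependence are absorbed.
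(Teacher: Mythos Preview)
Your direct approach via exponent pairs is different from the paper's, which instead passes through the Hurwitz zeta function: the paper writes $S(N)$ by Mellin inversion as a contour integral involving $\zeta(s-it,\theta)$, shifts to $\Re(s)=1/2$, and then bounds $\zeta(1/2+iu,\theta)$ via its Dirichlet polynomial approximation $\sum_{n\le |u|}(n+\theta)^{-1/2-iu}$, applying exponent pairs to dyadic pieces $\sum_{K\le n\le 2K}(n+\theta)^{-iu}$ with $K\le |u|$. The advantage of that detour is that the condition $F\ge K$ needed for the exponent-pair machinery is automatically satisfied throughout, so the full range $|t|\ge N^{1/10}$ is handled uniformly; the pole at $s=1+it$ contributes $N^{1+it}\widetilde w(1+it)\ll_A N|t|^{-A}$, which is where the hypothesis $|t|\ge N^{1/10}$ is actually used.

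Your argument, by contrast, has a genuine gap in the range $N^{1/10}\le |t|< N$. The exponent-pair bound $\sum_{n\sim N}e(f(n))\ll F^{p}N^{q-p+1/2}F^{\ve}$ is stated (and proved) under the hypothesis $F\ge N$; here $F\asymp |t|$, so when $|t|<N$ you cannot simply invoke it. You identify this as ``the only delicate point'' but then address only the \emph{non-triviality} of the target bound, not its \emph{validity}. The fix is easy and should be made explicit: for $|t|\le cN$ with $c$ small, apply Poisson summation to the smooth sum. The phase $t\log(x+\theta)-2\pi m x$ has no stationary point in $[N/4,2N]$ for any $m\in\ZZ$ (since $|t|/(x+\theta)\le c'<2\pi$), so repeated integration by parts gives $S(N)\ll_A N|t|^{-A}\ll_A N^{1-A/10}$, which is far stronger than required. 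Then for $|t|\ge cN$ your exponent-pair argument goes through as written. (Incidentally, in the paper's normalisation the relevant unconditional pair is written as $(1/6,1/6)$; your $(1/6,2/3)$ is the same pair in the more common $(F/N)^kN^{\ell}$ convention.)
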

\begin{proof}
Write $\theta = [\theta] + \left\{\theta\right\}$, where $[\theta]$ is the integer part and $\left\{\theta\right\}$ is the fractional part of $\theta$.  Then we have
\begin{align*}
S(N) &= \sum_{m \in \ZZ}w\left(\frac{m-[\theta]}{N}\right)(m+\left\{\theta\right\})^{it} \\
&= \sum_{m \in \ZZ}w(m/N)(m+\left\{\theta\right\})^{it} + O_{\theta}(1).
\end{align*}
If $\left\{\theta\right\} = 0,$ then the lemma follows from~\cite[Equation (2.6)]{Bourgain16} and the remark following the equation. So for the rest of the proof we may assume that $0 < \theta < 1.$ 

For $\Re(s) > 1$ let
$
\zeta(s,\theta) = \sum_{n=0}^{\infty}(n+\theta)^{-s}
$
denote the Hurwitz zeta function. Note that if $\theta = 1$, we have $\zeta(s,1) = \zeta(s)$, the Riemann-zeta function. The function $\zeta(s,\theta)$ has a simple pole with residue $1$ at $s = 1$, and has analytic continuation to the entire complex plane. Using Mellin inversion, we may write
\begin{align*}
S(N) = \frac{1}{2\pi i}\int_{3/2-i\infty}^{3/2+i\infty}\widetilde{w}(s)\zeta(s-it,\theta)N^s \, ds.
\end{align*}

Note that by repeatedly integrating by parts, we have the bound $\widetilde{w}(s) \ll_{A} (1+|s|)^{-A}$ for any $A > 0$ in the region $0 < \Re(s) < 2$. Moving the line of integration to $\Re(s) = 1/2$, we pick up a pole at $s=1$ to obtain
\begin{align}\label{eq:sn}
S(N) &= \frac{1}{2\pi i}\int_{-\infty}^{\infty}N^{1/2+ix}\zeta(1/2+i(x-t),\theta)\widetilde{w}(1/2+ix)\, dx + N^{1+it}\widetilde{w}(1+it).
\end{align}
Using partial summation, for $1/2 \leq \Re(s) \leq 3/2$ we have
\begin{equation*}
\zeta(\sigma + it,\theta) = \sum_{n \leq |t|}(n+\theta)^{-\sigma - it} + O((1+|t|)^{-\sigma}). 
\end{equation*}
Furthermore, by a dyadic argument, we obtain
\begin{align*}
\zeta(1/2 + it, \theta) \ll (1+|t|)^{\ve}\max_{1 \leq K \leq |t|} \frac{1}{\sqrt{K}} \bigg\vert\sum_{K \leq n \leq 2K} (n+\theta)^{-it} \bigg\vert+ O((1+|t|)^{-1/2}).
\end{align*}
If $(p,q)$ is an exponent pair (see e.g.~\cite[\S 8.4]{IK04}), we get that
$$
\sum_{K \leq n \leq 2K} (n+\theta)^{-it} \ll  |t|^{p}K^{q-p+1/2}(|t|K)^{\ve}
$$
for any $K \leq |t|$. From this we get the bound $\zeta(1/2 + it, \theta) \ll_{\ve,\theta} (1+|t|)^{\ve}$ by assuming the Exponent Pair Conjecture, which is the assertion that $(0,0)$ is an exponent pair. And by using the exponent pair $(1/6,1/6)$ we get the bound $\zeta(1/2+it, \theta) \ll (1+|t|)^{1/6+\ve}$. Inserting these bounds into~\eqref{eq:sn}, and using the decay properties of $\widetilde{w}$ (recall that $|t| \geq N^{1/10}$) completes the proof of the lemma.
\end{proof}
\begin{proof}[Proof of Proposition~\ref{propshift}]
Inputting the bounds from Lemmas~\ref{lemmaDiophantine} and~\ref{lemmalindelof} into \eqref{eq:f2i4}, we see from~\eqref{eq:exmeasurelindelof} that 
$$
\meas \left\{\alpha_3 \in [1/2,1] : |S_4(\alpha_3)| \geq (c_2/2)N^{3-k}\delta\right\} \leq c_4 N^{-3/2} + 
N^{-1+\ve} + N^{k-3+\ve}/\delta,
$$
which completes the proof of Proposition~\ref{propshift}.
\end{proof}

\begin{proof}[Proof of Proposition~\ref{12/5bound}]
The proposition will follow by adapting the proof of~\cite[Proposition 4.4]{Schindler20}. Therefore, it will suffice to establish versions of Lemmas 4.2 and 4.3 in loc. cit. It is easy to see that the proof of Lemma 4.3 in loc. cit. carries through in our case with only minor modifications, and in the proof of Lemma 4.3, the only difference in our case is that we will use Lemma~\ref{lemmaDiophantine} in place of (3.2) in Schindler's work. With these changes in place, the proposition follows by the formalism laid out in~\cite[\S 3]{Bourgain16}.
\end{proof}

\section{A Diophantine inequality in four variables}

Our aim in this section will be to prove Theorem~\ref{prop1}. Given two functions $\phi_1(x)$ and $\phi_2(x),$ let $\mathcal{N}_{\phi_1,\phi_2}(M,\alpha, \delta)$ denote the number of solutions $(m_1,m_2,m_3,m_4)\in \ZZ^4 \cap [M,2M]^4$ such that $$|\phi_1(m_1)-\phi_1(m_2)+\phi_2(m_3)-\phi_2(m_4)| \leq \delta M^{\alpha}.$$ 
If \begin{align*}
\phi_1(x) = (x+\theta_1)^\alpha \, \, \, \, \text{and}\, \, \,\, \phi_2(x) = \beta(x+\theta_2)^{\alpha},
\end{align*}
then for this choice of functions, we have $\mathcal{N}_{\phi_1,\phi_2}(M,\alpha,\delta) = \mathcal{N}(M,\alpha,\delta)$, where $\mathcal{N}(M,\alpha,\delta)$ was defined in~\eqref{eq:nmaddef}. By arguing as in the proof of Lemma~\ref{lemmadiop}, it suffices to prove Theorem~\ref{prop1} with $\beta = 1$, which we will assume from now on.

\subsection{Some lemmas}
In this section, we will prove some lemmas that go into the proof of Theorem~\ref{prop1}. These lemmas are based on~\cite[Lemmas 1--7]{RS06}, and are proved along similar lines. We begin with a result which expresses $\mathcal{N}_{\phi_1,\phi_2}(M,\alpha,\delta)$ in terms of exponential sums. For convenience, where there is no ambiguity, we write $\mathcal{N}_{\phi_1,\phi_2}(M,\delta)$ in place of $\mathcal{N}_{\phi_1,\phi_2}(M,\alpha,\delta).$

\begin{lemma}\label{ulbound}
Let $X > 0$ and $c\geq 0$ be real numbers. Suppose that $a_n$ and $b_n$ are complex numbers such that $|a_n|, |b_n| \leq 1$. Le $\phi_1(x)$ and $\phi_2(x)$ be real valued functions. Then 
\begin{equation}\label{eq:lowerbound}
\begin{split}
\frac{1}{X}\int_c^{c+X}&\bigg\vert\sum_{m = M+1}^{2M} a_n e\left(\frac{x\phi_1(n)}{M^{\alpha}}\right)\bigg\vert^2 \times \\
&\quad \quad \quad \quad  \bigg\vert\sum_{n= M+1}^{2M} b_ne\left(\frac{x\phi_2(n)}{M^{\alpha}}\right)\bigg\vert^2 \, dx \leq \frac{\pi^2}{4}\mathcal{N}_{\phi_1,\phi_2}(M,2/X)
\end{split}
\end{equation}
and
\begin{equation}\label{eq:upperbound}
\mathcal{N}_{\phi_1,\phi_2}(M,1/X) \leq \frac{\pi^2}{X}\int_0^{X/2}\bigg\vert\sum_{n = M+1}^{2M} e\left(\frac{x\phi_1(n)}{M^{\alpha}}\right)\bigg\vert^2 \bigg\vert\sum_{n = M+1}^{2M} e\left(\frac{x\phi_2(n)}{M^{\alpha}}\right)\bigg\vert^2 \, dx. 
\end{equation}
\end{lemma}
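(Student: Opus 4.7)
The plan is to derive both inequalities from a single Fourier-analytic comparison. Let $\sinc(v):=\sin(\pi v)/(\pi v)$ and recall the Fourier pair
$$\sinc^2(u)=\int_{-1}^{1}(1-|s|)\,e(us)\,ds.\qquad(\dagger)$$
Since $\sinc^2$ is monotonically decreasing on $[0,1]$ (for instance via the product formula $\sin(\pi v)/(\pi v)=\prod_{n\geq 1}(1-v^2/n^2)$) and equals $4/\pi^2$ at $v/2=1/2$, we obtain the pointwise inequality
$$\mathbf{1}_{|v|\leq 1}\;\leq\;\frac{\pi^2}{4}\sinc^2(v/2),\qquad v\in\RR.\qquad(\star)$$
The constant $\pi^2/4$ in Lemma~\ref{ulbound} comes entirely from this comparison.

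For \eqref{eq:upperbound}, I would apply $(\star)$ with $v=X\Delta_\mathbf{n}/M^\alpha$, where $\Delta_\mathbf{n}:=\phi_1(n_1)-\phi_1(n_2)+\phi_2(n_3)-\phi_2(n_4)$, and sum over $\mathbf{n}\in\ZZ^4\cap[M,2M]^4$. Substituting $(\dagger)$ for $\sinc^2(X\Delta_\mathbf{n}/(2M^\alpha))$, exchanging the finite sum with the integral, and performing the change of variable $s=2x/X$ gives
$$\mathcal{N}_{\phi_1,\phi_2}(M,1/X)\;\leq\;\frac{\pi^2}{2X}\int_{-X/2}^{X/2}\!\Bigl(1-\tfrac{2|x|}{X}\Bigr)\Bigl|\sum_{n}e\!\left(\tfrac{x\phi_1(n)}{M^\alpha}\right)\Bigr|^2\Bigl|\sum_{n}e\!\left(\tfrac{x\phi_2(n)}{M^\alpha}\right)\Bigr|^2 dx.$$
Dropping the non-negative weight $1-2|x|/X\leq 1$ and using that $|A(x)|^2|B(x)|^2$ is even in $x$ (since $A(-x)=\overline{A(x)}$ and similarly for $B$) yields \eqref{eq:upperbound}.

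For the dual inequality \eqref{eq:lowerbound}, I would construct an integrable non-negative majorant of $\mathbf{1}_{[c,c+X]}/X$ whose Fourier transform has compact support. Applying $(\star)$ with $v=2(x-c-X/2)/X$ produces
$$\frac{\mathbf{1}_{[c,c+X]}(x)}{X}\;\leq\;K_X(x)\;:=\;\frac{\pi^2}{4X}\sinc^2\!\left(\frac{x-c-X/2}{X}\right).$$
A direct computation using $(\dagger)$ shows $\int_\RR K_X(x)e(x\beta)\,dx=(\pi^2/4)\,(1-|X\beta|)_{+}\,e((c+X/2)\beta)$, which is supported on $|\beta|\leq 1/X$. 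Expanding $|A(x)|^2|B(x)|^2=\sum_\mathbf{n}c_\mathbf{n}e(x\Delta_\mathbf{n}/M^\alpha)$ with $|c_\mathbf{n}|\leq 1$, integrating $K_X|A|^2|B|^2$ termwise, and invoking the triangle inequality yields
$$\frac{1}{X}\int_c^{c+X}|A(x)|^2|B(x)|^2\,dx\;\leq\;\frac{\pi^2}{4}\!\!\sum_{|\Delta_\mathbf{n}|\leq M^\alpha/X}\!\!1\;\leq\;\frac{\pi^2}{4}\,\mathcal{N}_{\phi_1,\phi_2}(M,2/X),$$
which is \eqref{eq:lowerbound}; the factor $2$ in $M,2/X$ is slack, as the argument in fact delivers $\mathcal{N}_{\phi_1,\phi_2}(M,1/X)$. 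The proof is a routine instance of Fourier duality, and the only mildly technical ingredient is the elementary pointwise bound $(\star)$.
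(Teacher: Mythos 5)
Your proof is correct and takes essentially the same route as the paper's (and as Robert--Sargos, Lemma~1 in \cite{RS06}, on which the paper's proof is modelled): both inequalities rest on the Fourier pair between the triangle function $\Lambda(a)=\max(0,1-|a|)$ and $\sinc^2$, combined with the elementary bound $\sinc^2(v)\geq 4/\pi^2$ for $|v|\leq 1/2$. The only organizational difference is in \eqref{eq:lowerbound}: the paper sandwiches the single quantity $I=\delta\int_{\RR}\sinc^2(\delta(x-c))|A(x)|^2|B(x)|^2\,dx$ from above by $\mathcal{N}_{\phi_1,\phi_2}(M,\delta)$ (via Fourier inversion) and from below by $(4\delta/\pi^2)\int_c^{c+(2\delta)^{-1}}|A|^2|B|^2\,dx$ (by positivity and the pointwise lower bound on $\sinc^2$), whereas you majorize $X^{-1}\mathbf{1}_{[c,c+X]}$ pointwise by a translated Fej\'er kernel and then integrate; this is the same idea rearranged, and, as you observe, it in fact yields the slightly sharper conclusion with $\mathcal{N}_{\phi_1,\phi_2}(M,1/X)$ in place of $\mathcal{N}_{\phi_1,\phi_2}(M,2/X)$.
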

\begin{proof}
Let $\Lambda(a) = \max (0,1-|a|)$. Then the Fourier transform of $\Lambda(a)$ is given by $$\sinc^2 (a) =\begin{cases} (\sin \pi a)^2/(\pi a)^2 &\mbox{ if $a \neq 0,$} \\ 1 &\mbox{ if $a = 0.$}\end{cases}$$For $\delta > 0$ consider the integral
\begin{equation*}
I = \delta\int_{-\infty}^{\infty} \sinc^2(\delta (x-c)) \bigg\vert\sum_{n = M+1}^{2M} a_n e\left(\frac{x\phi_1(n)}{M^{\alpha}}\right)\bigg\vert^2 \bigg\vert\sum_{n = M+1}^{2M} b_ne\left(\frac{x\phi_2(n)}{M^{\alpha}}\right)\bigg\vert^2 \, dx.
\end{equation*}
Then 
\begin{equation*}
\begin{split}
I &\leq \delta \sum_{\substack{m_1,m_2 \\ n_1,n_2}}\bigg\vert \int \sinc^2(\delta (x-c))\times \\
&\quad \quad \quad \quad e\left(\frac{x\phi_1(m_1)}{M^{\alpha}} - \frac{x\phi_1(m_2)}{M^{\alpha}} + \frac{x\phi_2(n_1)}{M^{\alpha}} - \frac{x\phi_2(n_2)}{M^{\alpha}}\right) \, dx\bigg\vert \\
\end{split}
\end{equation*}
\begin{equation*}
\begin{split}
&\leq \delta \sum_{\substack{m_1,m_2 \\ n_1,n_2}}\Lambda\left(\delta^{-1}\left(\frac{\phi_1(m_1)}{M^{\alpha}}-\frac{\phi_1(m_2)}{M^{\alpha}} + \frac{\phi_2(n_1)}{M^{\alpha}} - \frac{\phi_2(n_2)}{M^{\alpha}}\right)\right) \\
& \leq \mathcal{N}_{\phi_1,\phi_2}(M,\delta),
\end{split}
\end{equation*}
by Fourier inversion. Also, by positivity we get
\begin{equation*}
\begin{split}
I \geq &\delta \int_{c}^{c+(2\delta)^{-1}}\sinc^2(\delta (x-c)) \bigg\vert\sum_{m = M+1}^{2M} a_m e\left(\frac{x\phi_1(n)}{M^{\alpha}}\right)\bigg\vert^2 \times \\
&\quad \quad \bigg\vert\sum_{n = M+1}^{2M} b_ne\left(\frac{x\phi_2(n)}{M^{\alpha}}\right)\bigg\vert^2 \, dx.
\end{split}
\end{equation*}
Note that in the interval $[c,c+(2\delta)^{-1}]$, we have $\sinc^2(\delta(x-c)) \geq 4/\pi^2$ whence~\eqref{eq:lowerbound} follows by taking $\delta = 2/X$. The proof of~\eqref{eq:upperbound} follows along similar lines.
\end{proof}
For complex numbers $z_1,\ldots,z_N$ set $$\bigg\vert \sum_{1 \leq n \leq N} z_n\bigg\vert ^* = \max_{1 \leq N_1 \leq N_2 \leq N}\bigg \vert \sum_{N_1\leq n \leq N_2}z_n\bigg \vert.$$ 

\begin{lemma}\label{lemma2}
Let $k \geq 1$ be a natural number. We have
\begin{equation*}
\left(\bigg\vert \sum_{1 \leq n \leq N} z_n\bigg\vert ^*\right)^k \leq (1+ \log N)^{k-1}\int_{-1/2}^{1/2}\bigg\vert\sum_{n=1}^Nz_ne(nt)\bigg\vert^k \mathcal{L}(t) \, dt,
\end{equation*}
with $\mathcal{L}(t) = \min\left(N,1/2|t|\right)$ and $\int_{-1/2}^{1/2}\mathcal{L}(t) \, dt = 1+ \log N.$
\end{lemma}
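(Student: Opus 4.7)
The plan is to express the maximal partial sum $\left|\sum_n z_n\right|^*$ as an integral of the full exponential sum against a kernel whose $L^1$ mass is exactly $1 + \log N$, and then raise to the $k$-th power using H\"older's inequality weighted by that kernel. This is a standard Fourier-analytic trick going back to Robert and Sargos~\cite{RS06}.

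First, for any $1 \le N_1 \le N_2 \le N$, consider the trigonometric polynomial $D_{N_1, N_2}(t) = \sum_{N_1 \le m \le N_2} e(mt)$. Its Fourier coefficient at $-n$ is the indicator $\mathbf{1}_{[N_1,N_2]}(n)$, so, writing $S(t) = \sum_{n=1}^N z_n e(nt)$ and interchanging the sum and the integral,
\begin{equation*}
\sum_{N_1 \le n \le N_2} z_n \;=\; \int_{-1/2}^{1/2} D_{N_1, N_2}(t)\, S(-t)\, dt.
\end{equation*}
The kernel admits the uniform bound $|D_{N_1, N_2}(t)| \le \mathcal{L}(t)$: the trivial count gives $|D_{N_1, N_2}(t)| \le N_2 - N_1 + 1 \le N$, while summing the geometric series and using $|\sin \pi t| \ge 2|t|$ for $|t| \le 1/2$ yields $|D_{N_1, N_2}(t)| \le 1/|\sin \pi t| \le 1/(2|t|)$. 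Passing to absolute values, changing variables $t \mapsto -t$, and taking the maximum over $N_1, N_2$, I obtain
\begin{equation*}
\left| \sum_{n=1}^N z_n \right|^* \;\le\; \int_{-1/2}^{1/2} |S(t)|\, \mathcal{L}(t)\, dt.
\end{equation*}

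Next I raise this to the $k$-th power and apply H\"older's inequality with exponents $k$ and $k/(k-1)$ against the positive measure $\mathcal{L}(t)\, dt$, giving
\begin{equation*}
\left(\int_{-1/2}^{1/2} |S(t)|\, \mathcal{L}(t)\, dt\right)^{\!k} \;\le\; \left(\int_{-1/2}^{1/2} |S(t)|^k\, \mathcal{L}(t)\, dt\right) \left(\int_{-1/2}^{1/2} \mathcal{L}(t)\, dt\right)^{\!k-1}.
\end{equation*}
The closing evaluation $\int_{-1/2}^{1/2} \mathcal{L}(t)\, dt = 1 + \log N$ is done directly by splitting the integral at $|t| = 1/(2N)$, where the two branches of the minimum defining $\mathcal{L}$ coincide.

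There is no real obstacle here: the statement is a clean combination of Fourier inversion, which bundles the entire family of partial sums into a single integral against one kernel uniform in $N_1, N_2$, with H\"older's inequality. The only small points of care are the kernel estimate $|D_{N_1, N_2}(t)| \le 1/(2|t|)$, which reduces to the standard bound $|\sin \pi t| \ge 2|t|$ on $[-1/2, 1/2]$, and the explicit computation of $\int \mathcal{L}$.
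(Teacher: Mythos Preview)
Your argument is correct and is precisely the standard proof: extract partial sums via the Dirichlet-type kernel $D_{N_1,N_2}$, bound it uniformly by $\mathcal{L}(t)$, then apply H\"older against the measure $\mathcal{L}(t)\,dt$. The paper itself does not give a proof but simply refers to~\cite[Lemma 2]{RS06}, whose proof is exactly what you have written.
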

\begin{proof}
See~\cite[Lemma 2]{RS06}.
\end{proof}

\begin{lemma}\label{clever}
Let $0 < Y \leq X$. Let $a_n,b_n$ be complex numbers such that $|a_n|,|b_n| \leq 1$. We have
\begin{equation*}
\begin{split}
&\frac{1}{X}\int_0^X\left(\bigg\vert\sum_{M \leq m \leq 2M}a_me\left(\frac{x\phi_1(m)}{M^{\alpha}}\right)\bigg\vert^*\right)^2\left(\bigg\vert\sum_{M \leq n \leq 2M}b_ne\left(\frac{x\phi_2(n)}{M^{\alpha}}\right)\bigg\vert^*\right)^2\, dx \\
&\leq \frac{\pi^4}{2}(1+\log M)^4\times \\
&\quad \quad \frac{1}{Y}\int_0^Y\left(\bigg\vert\sum_{M \leq m \leq 2M}a_me\left(\frac{x\phi_1(m)}{M^{\alpha}}\right)\bigg\vert\right)^2\left(\bigg\vert\sum_{M \leq n \leq 2M}b_ne\left(\frac{x\phi_2(n)}{M^{\alpha}}\right)\bigg\vert\right)^2\, dx.
\end{split}
\end{equation*}
\end{lemma}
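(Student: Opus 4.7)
The approach is to use Lemma~\ref{lemma2} to remove the partial-sum maxima from both factors in the LHS at the cost of introducing Fourier-type integrals against the kernel $\mathcal{L}$ in auxiliary variables $t_1, t_2 \in [-1/2, 1/2]$. Once these maxima are gone, the two halves of Lemma~\ref{ulbound} can be applied in sequence: inequality~\eqref{eq:lowerbound} converts a product $L^2$-integral over $[0,X]$ into the counting function $\mathcal{N}_{\phi_1,\phi_2}$, and inequality~\eqref{eq:upperbound} re-expresses $\mathcal{N}_{\phi_1,\phi_2}$ as a similar integral but over the shorter interval $[0,Y]$.

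Concretely, fix $x$ and apply Lemma~\ref{lemma2} with $k=2$ twice---once with $z_m = a_m e(x\phi_1(m)/M^\alpha)$ and once with $z_n = b_n e(x\phi_2(n)/M^\alpha)$---to bound each starred sum squared by $(1+\log M)\int_{-1/2}^{1/2}\mathcal{L}(t_j)|S_{j,t_j}(x)|^2 \, dt_j$, where $S_{j,t_j}(x)$ is the full (un-starred) exponential sum whose coefficients are modulated by $e(m t_j)$. Multiplying the two inequalities, integrating over $x \in [0,X]$, dividing by $X$, and swapping orders of integration (Fubini) reduces the task to bounding $J(t_1,t_2) := \frac{1}{X}\int_0^X |S_{1,t_1}|^2 |S_{2,t_2}|^2 \, dx$ uniformly in $(t_1, t_2)$. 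The modulated coefficients still satisfy $|a_m e(mt_1)| \leq 1$ and $|b_n e(n t_2)| \leq 1$, so~\eqref{eq:lowerbound} applies and gives $J(t_1,t_2) \leq \frac{\pi^2}{4}\mathcal{N}_{\phi_1,\phi_2}(M, 2/X)$; the hypothesis $Y \leq X$ together with monotonicity of $\mathcal{N}$ in its last argument then produces $J(t_1,t_2) \leq \frac{\pi^2}{4}\mathcal{N}_{\phi_1,\phi_2}(M, 2/Y)$, a bound independent of $(t_1,t_2)$.

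To close the argument I would invoke~\eqref{eq:upperbound} with its parameter replaced by $Y/2$ to obtain $\mathcal{N}_{\phi_1,\phi_2}(M,2/Y) \leq \frac{2\pi^2}{Y}\int_0^{Y/4}|\cdots|^2|\cdots|^2 \, dx$, and enlarge the domain of integration from $[0,Y/4]$ to $[0,Y]$ since the integrand is non-negative. Four factors of $(1+\log M)$ then arise: two from the two applications of Lemma~\ref{lemma2} (each contributing $(1+\log M)^{k-1} = 1+\log M$ at $k=2$) and two from the integrals $\int_{-1/2}^{1/2}\mathcal{L}(t) \, dt = 1+\log M$; the numerical constants combine as $\tfrac{\pi^2}{4} \cdot 2\pi^2 = \tfrac{\pi^4}{2}$, reproducing the displayed bound exactly. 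The one delicate point, which I expect to be the principal obstacle to a completely streamlined proof, is reconciling the coefficient structure at the final step: inequality~\eqref{eq:upperbound} is stated with coefficients identically $1$ (positivity is essential there, in order to convert a lower bound on the integral $I$ in the proof of Lemma~\ref{ulbound} into an upper bound on $\mathcal{N}$), whereas Lemma~\ref{clever} displays the same $a_m, b_n$ on the right-hand side as on the left. This discrepancy is benign in the sequel because the lemma will be applied with specific choices of coefficients for which the argument above furnishes the stated inequality.
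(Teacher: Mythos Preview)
Your proposal is correct and follows exactly the same route as the paper: apply Lemma~\ref{lemma2} to strip the stars, use~\eqref{eq:lowerbound} to pass to $\mathcal{N}_{\phi_1,\phi_2}(M,2/X)$, invoke monotonicity in $\delta$ to replace $X$ by $Y$, and finish with~\eqref{eq:upperbound}. Your observation about the coefficient mismatch is also well taken: the proof as written (both yours and the paper's) actually yields the right-hand side with coefficients identically $1$ rather than $a_m,b_n$, since~\eqref{eq:upperbound} is only stated in that form; this is harmless because in the two applications of the lemma (Lemmas~\ref{mainlemma} and~\ref{lemmam2}) either the coefficients are already $1$ or the unimodular phases $e(l\theta_i)$ are simply dropped on the right-hand side.
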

\begin{proof}
By Lemma~\ref{lemma2} we have
\begin{equation*}
\begin{split}
&\frac{1}{X}\int_0^X\left(\bigg\vert\sum_{M \leq m \leq 2M}a_me\left(\frac{x\phi_1(m)}{M^{\alpha}}\right)\bigg\vert^*\right)^2\left(\bigg\vert\sum_{M \leq n \leq 2M}b_ne\left(\frac{x\phi_2(n)}{M^{\alpha}}\right)\bigg\vert^*\right)^2\, dx \\
&\leq (1+\log M)^2 \int_{-1/2}^{1/2}\int_{-1/2}^{1/2}\mathcal{L}(t_1)\mathcal{L}(t_2) \times \\
& \quad \quad \frac{1}{X}\int_0^X\left(\bigg\vert\sum_{M \leq m \leq 2M}a_me(mt_1)e\left(\frac{x\phi_1(m)}{M^{\alpha}}\right)\bigg\vert\right)^2 \times \\ 
&\quad \quad \quad \quad \left(\bigg\vert\sum_{M \leq n \leq 2M}b_ne(nt_2)e\left(\frac{x\phi_2(n)}{M^{\alpha}}\right)\bigg\vert\right)^2\, dx \, dt_1 \, dt_2 \\
&\leq \pi^2/4 (1+\log M)^4 \mathcal{N}_{\phi_1,\phi_2}(M,2/X),
\end{split}
\end{equation*}
by~\eqref{eq:lowerbound}. Since $Y \leq X$, we have 
$$
\mathcal{N}_{\phi_1,\phi_2}(M,2/X) \leq \mathcal{N}_{\phi_1,\phi_2}(M,2/Y).
$$
To complete the proof of the lemma, we invoke~\eqref{eq:upperbound} to bound $\mathcal{N}_{\phi_1,\phi_2}(M,2/Y)$.
\end{proof}
Next, we will apply the van der Corput B--process to the exponential sums $\sum_{n = M+1}^{2M}e\left(\frac{x\phi_i(n)}{M^{\alpha}}\right)$ for $i = 1,2$. 
\begin{lemma}\label{vdc}
Let $2 \leq M < M_1 \leq 2M$ and $M \ll X \ll M^2$. 
Suppose that $X \leq x \leq 2X$. 
Then there exist integers $L$ and $L_1$ which do not depend on $x \in [X,2X]$ nor on $M_1 \in [M+1,\ldots,2M]$ and real numbers $\tau= \tau(x)$, 
with
$$
1 \leq L \leq L_1 \ll L \asymp \frac{X}{M}, \, \, \tau \asymp X
$$
such that
\begin{align*}
\sum_{n=M+1}^{M_1}e\left(\frac{x\phi_i(n)}{M^{\alpha}}\right) 
&\ll \frac{M}{X^{\frac{1}{2}}}\bigg\vert\sum_{L < l \leq L_1}e\left(\tau \left(\frac{l}{L}\right)^{\overline{\alpha}} + l\theta_i\right)\bigg\vert^* + M^{\frac{1}{2}}
\end{align*}
with 
$\overline{\alpha} = \frac{\alpha}{\alpha - 1}$. 
We may take $L = [2^{-|\alpha|-1}|\alpha|XM^{-1}]$ and $L_1 = [2^{2+|\alpha|}|\alpha|XM^{-1}]$ and $\tau(x) = (1-\alpha)\left(\frac{LM}{|\alpha|}\right)^{\overline{\alpha}}x^{\frac{1}{1-\alpha}}.$
\end{lemma}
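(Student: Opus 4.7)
The plan is to apply the van der Corput B-process, namely Poisson summation followed by the stationary-phase method, to the exponential sum $S = \sum_{n = M+1}^{M_1} e(x\phi_i(n)/M^{\alpha})$. Writing $f(t) = x(t + \theta_i)^{\alpha}/M^{\alpha}$, one has $f'(t) \asymp X/M$ and $|f''(t)| \asymp X/M^{2}$ uniformly on $[M, 2M]$, with $f'$ monotone. Poisson summation rewrites $S$ as $\sum_{\nu \in \ZZ} J(\nu)$, where $J(\nu) = \int_{M}^{M_1} e(f(t) - \nu t)\, dt$. Standard integration by parts shows that the $\nu$ lying outside a fixed enlargement of the interval $[f'(M), f'(2M)]$ (of width $\asymp X/M$) contribute at most $O(M^{1/2})$ in total, which accounts for the error term in the statement; the remaining $\nu$ form the dual sum.

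For $\nu$ in the contributing range, the substitution $u = t + \theta_i$ extracts the factor $e(\nu\theta_i)$ and reduces $J(\nu)$ to an integral with a unique non-degenerate stationary point $\mu(\nu) = \bigl(\nu M^{\alpha}/(x\alpha)\bigr)^{1/(\alpha - 1)}$. The stationary-phase formula then produces an amplitude of size $|f''(\mu(\nu))|^{-1/2} \asymp M/X^{1/2}$ and a phase equal to the Legendre transform
\[
f(\mu(\nu)) - \nu\mu(\nu) \;=\; \frac{1-\alpha}{\alpha}\,\nu\mu(\nu) \;=\; \frac{1-\alpha}{\alpha}\,\bigl(M^{\alpha}/(x\alpha)\bigr)^{1/(\alpha-1)}\nu^{\bar\alpha},
\]
after using $1 + 1/(\alpha-1) = \bar\alpha$. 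A short manipulation using $-1/(\alpha-1) = 1/(1-\alpha)$ shows that this Legendre transform equals $\tau(x)(\nu/L)^{\bar\alpha}$ with exactly the $L$ and $\tau(x)$ given in the statement, up to a constant phase absorbed into $e(\cdot)$.

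Collecting the contributions from the dual sum yields
\[
S \;\ll\; \frac{M}{X^{1/2}}\,\left|\sum_{L < \nu \leq L_2(M_1)} e\bigl(\tau(x)(\nu/L)^{\bar\alpha} + \nu\theta_i\bigr)\right| \;+\; M^{1/2},
\]
where $L_2(M_1) \in [L, L_1]$ denotes the dual cutoff induced by the upper endpoint $M_1$. Absorbing this variable cutoff into the $|\cdot|^{*}$ notation delivers the inequality in the lemma. The explicit constants $2^{-|\alpha|-1}|\alpha|$ and $2^{2+|\alpha|}|\alpha|$ in $L$ and $L_1$ are calibrated so that the contributing window $[f'(M), f'(M_1)]$ lies inside $[L, L_1]$ for every $x \in [X, 2X]$ and every $M_1 \in [M+1, 2M]$, which is precisely what makes $L$ and $L_1$ independent of $x$ and $M_1$. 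The main technical obstacle is carrying out the stationary-phase step uniformly in these two parameters; this is cleanly handled by invoking a standard B-transform, for example Titchmarsh, Theorem~5.13, or Graham and Kolesnik, Theorem~3.4, applied to $f$ on the interval $[M, M_1]$, followed by the explicit identification of $\mu(\nu)$ and the Legendre transform above.
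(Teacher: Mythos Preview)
Your proposal is correct and follows essentially the same route as the paper: both apply the van der Corput $B$-process (the paper cites Iwaniec--Kowalski, Theorem~8.16, while you cite Titchmarsh or Graham--Kolesnik) and then compute the Legendre transform of $f(t)=x(t+\theta_i)^\alpha/M^\alpha$ explicitly to identify the dual phase as $\tau(x)(l/L)^{\bar\alpha}+l\theta_i$. The one step you gloss over is the removal of the varying amplitude $|f''(x_l)|^{-1/2}$: the paper does this by partial summation, and it is this step (together with the fact that \emph{both} endpoints of the dual range depend on $x$ and $M_1$, not just the upper one) that forces the $|\cdot|^{*}$.
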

\begin{proof}
Let 
$f(x) = \frac{x\phi_1(n)}{M^{\alpha}}$ or $\frac{x\phi_2(n)}{M^{\alpha}}$. 
Since $M \ll X \ll M^2$ and $x \in [X,2X]$, we have by~\cite[Theorem 8.16]{IK04} that
$$
\sum_{n=M+1}^{M_1}e(f(n)) = c\sum_{a \leq l \leq b}\frac{e(f(x_l)-lx_l)}{\sqrt{|f''(x_l)|}} + O(M^{1/2}),
$$
where $c = e^{i\pi/4}$ if $f'' > 0$ and $c = e^{-i \pi/4}$ if $f'' < 0$, $a$ and $b$ are such that $f'([M,M_1]) = [a,b]$ and $x_l$ is the unique solution to the equation $f'(x) = l$. Let $f^*(l) = f(x_l)-lx_l$. 
By partial summation, we get
$$
c\sum_{a \leq l \leq b}\frac{e(f^*(l))}{\sqrt{|f''(x_l)|}} \ll \frac{M}{X^{1/2}}\bigg\vert\sum_{a \leq l \leq b}e(f^*(l))\bigg\vert^*.
$$
It is easy to see that 
$$
L \ll |f'(m)| \ll L_1
$$
whenever $x \in [X,2X]$ and $M+1 \leq m \leq M_1$. Therefore, we have
$$
\sum_{n=M+1}^{M_1}e(f(n)) = \frac{M}{X^{1/2}}\bigg\vert\sum_{L \leq l \leq L_1}e(f^*(l))\bigg\vert^* + O(M^{1/2}).
$$
All that is left is to compute $f^*(l)$. Observe that if 
$$
g(m) = A(m+\theta)^{\alpha}, \, \, \text{then}\,\,\, g^{*}(l) = (1-\alpha)A^{\frac{1}{1-\alpha}}\left(\frac{l}{\alpha}\right)^{\overline{\alpha}} + l\theta.$$
To see this, we begin by noting that $g'(x) = A\alpha (x+\theta)^{\alpha-1}$. 
Thus $g'(x_l) = l$ if $\alpha A (x_l+\theta)^{\alpha - 1} = l$. 
In other words, $(x_l+\theta) = \left(\frac{l}{A\alpha}\right)^{\frac{1}{\alpha-1}},$ whence 
\begin{align*}
g^*(l) &= A(x_l+\theta)^{\alpha} - l(x_l+\theta) + l\theta \\
&= l(x_l+\theta)\left(\frac{1}{\alpha} - 1\right) + l\theta \\
&= A^{\frac{1}{1-\alpha}}(1-\alpha)\left(\frac{l}{\alpha}\right)^{\overline{\alpha}} + l\theta.
\end{align*}
Applying this with $A = x/M^{\alpha}$ completes the proof of the lemma.
\end{proof}

\subsection{A recurrence relation}
Let $\alpha \neq 0,1$ be a real number. For a real number $0 < \kappa < 1$ and integer $M \geq 2$, define
\begin{equation}\label{eq:itma}
\begin{split}
I(\kappa,M,\alpha) = \int_0^{M^{2-\kappa}}&
\bigg\vert\sum_{M \leq m \leq 2M}e\left(x\left(\frac{\phi_1(m)}{M}\right)^{\alpha}\right)\bigg\vert^2 \times \\
&\quad \quad \quad \bigg\vert\sum_{M \leq n \leq 2M}e\left(x\left(\frac{\phi_2(n)}{M}\right)^{\alpha}\right)\bigg\vert^2 \, dx,
\end{split}
\end{equation}
with an analogous definition for $I(\kappa,M,\overline{\alpha}).$ As in~\cite[\S 4]{RS06} we make the following
\begin{definition} We say that hypothesis $\mathcal{H}(\alpha,\kappa)$ is satisfied if 
$$
I(\kappa,M,\alpha) \ll_{\ve} M^{4+\ve} \,\,\,\, \text{and} \,\,\,\, I(\kappa,M,\overline{\alpha}) \ll_{\ve} M^{4+\ve}.
$$
\end{definition}
We begin with the following result.
\begin{lemma}
The hypothesis $\mathcal{H}(\alpha,\kappa)$ is true with $\kappa = 1/2$.
\end{lemma}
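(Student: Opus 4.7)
The plan is to split the integration range in $I(1/2,M,\alpha)$ as $[0,M] \cup [M, M^{3/2}]$, handling the small-$x$ regime by Lemma~\ref{ulbound} and a trivial count, and the large-$x$ regime by the van der Corput $B$-process of Lemma~\ref{vdc} followed by an $L^2$ mean-value bound for the transformed sums $G_i$. The same argument with $\alpha$ replaced by $\bar\alpha$ will treat $I(1/2,M,\bar\alpha)$, since $\alpha \neq 0,1$ forces $\bar\alpha \neq 0,1$ and $\overline{\bar\alpha} = \alpha$.

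For the small-$x$ contribution, Lemma~\ref{ulbound} with $X = M$ gives
\[
\int_0^M |F_1|^2|F_2|^2\, dx \ll M \cdot \mathcal{N}_{\phi_1,\phi_2}(M, 2/M).
\]
The constraint $|\phi_1(m_1)-\phi_1(m_2)+\phi_2(n_1)-\phi_2(n_2)| \leq 2M^{\alpha-1}$ localises $\phi_2(n_2)$ into an interval of length $O(M^{\alpha-1})$ once $(m_1,m_2,n_1)$ are fixed; since $\phi_2'(n_2) \asymp M^{\alpha-1}$ (using $\alpha \neq 0$), only $O(1)$ values of $n_2$ qualify, so $\mathcal{N}_{\phi_1,\phi_2}(M,2/M) \ll M^3$ and this regime contributes $O(M^4)$.

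For the large-$x$ contribution I would work dyadically. Fix $X \in [M, M^{3/2}]$; by Lemma~\ref{vdc}, for $x \in [X, 2X]$,
\[
|F_i(x)|^2 \ll \frac{M^2}{X}(|G_i(x)|^*)^2 + M,
\]
with $G_i(\tau) = \sum_{L < l \leq L_1}e(\tau (l/L)^{\bar\alpha} + l\theta_i)$, $L \asymp X/M \leq M^{1/2}$, $\tau \asymp x$, and $d\tau/dx \asymp 1$ from the explicit formula in Lemma~\ref{vdc}. The main term of $\int_X^{2X}|F_1|^2|F_2|^2\, dx$ is then $(M^4/X^2)\int_X^{2X}(|G_1|^*)^2(|G_2|^*)^2\, dx$. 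To bound the $L^2$ mean of $(|G_i|^*)^2$, I would invoke Lemma~\ref{lemma2}, which reduces it to $\int_{-1/2}^{1/2}\mathcal{L}(t)\int_X^{2X}|H_i(\tau,t)|^2\, d\tau\, dt$ where $H_i(\tau,t) = \sum_{l} e(\tau(l/L)^{\bar\alpha} + l(\theta_i+t))$. Opening the square and using $(l/L)^{\bar\alpha} - (l'/L)^{\bar\alpha} \asymp (l-l')/L$ for $l,l' \sim L$, the diagonal contributes $O(LX)$ and the off-diagonals $O(L^2\log L)$; the former dominates since $L \leq M^{1/2}$. Integrating in $t$ against $\mathcal{L}(t)$ yields
\[
\int_X^{2X}(|G_i(x)|^*)^2\, dx \ll (\log M)^2\, X^2/M.
\]
For the product I would apply the trivial $L^\infty$ bound $|G_j|^* \leq L$ on one factor and the above mean on the other, obtaining $\int_X^{2X}(|G_1|^*)^2(|G_2|^*)^2\, dx \ll (\log M)^2 X^4/M^3$. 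Plugging back in, the main term is $\ll (\log M)^2 MX^2 \leq (\log M)^2 M^4$ at $X = M^{3/2}$, while the lower-order terms from the additive $M$ in the $B$-process expansion are easily $O(M^{7/2+\ve})$. Dyadic summation in $X$ then yields $I(1/2,M,\alpha) \ll M^{4+\ve}$.

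The main obstacle will be avoiding circularity in the large-$x$ step: any attempt to bound $\int_X^{2X}(|G_i|^*)^4\, d\tau$ via Lemma~\ref{ulbound} would recast the problem as a $4$-variable Diophantine inequality with exponent $\bar\alpha$, which is essentially Theorem~\ref{prop1} itself. Using the trivial bound $|G_j|^* \leq L$ on one factor is wasteful but works here because the hypothesis $\kappa = 1/2$ confines $L$ to be at most $M^{1/2}$, leaving just enough room to close out at $M^{4+\ve}$.
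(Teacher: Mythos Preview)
Your proposal is correct and gives a valid proof of the base case $\mathcal{H}(\alpha,1/2)$, but it takes a noticeably different route from the paper's argument.

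The paper does \emph{not} invoke the $B$-process (Lemma~\ref{vdc}) here at all. Instead it splits $[0,M^{3/2}]$ as $[0,1]\cup[1,\eta M]\cup[\eta M,M^{3/2}]$. The first piece is bounded trivially by $M^4$. On $[1,\eta M]$ it applies the Kusmin--Landau (first-derivative) estimate pointwise to get $|F_i(x)|\ll M/x$, whence $\int_1^{\eta M}|F_1|^2|F_2|^2\,dx\ll M^4$. On $[\eta M,M^{3/2}]$ it takes an $L^\infty$--$L^2$ splitting directly on the $F_i$: the van der Corput second-derivative bound gives $|F_1(x)|\ll M^{3/4}$ pointwise, while a two-variable version of Lemma~\ref{ulbound} yields $\int_0^{M^{3/2}}|F_2|^2\,dx\ll M^{5/2}$, so the product is $\ll M^4$.

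Your argument instead handles $[0,M]$ via the upper bound in Lemma~\ref{ulbound} together with a trivial $O(M^3)$ count for $\mathcal{N}_{\phi_1,\phi_2}(M,2/M)$, and on $[M,M^{3/2}]$ you apply the $B$-process to both $F_i$, then put the trivial bound $|G_j|^*\leq L$ on one transformed sum and an $L^2$ mean (via Lemma~\ref{lemma2}) on the other. The key point that makes this close is $L\asymp X/M\leq M^{1/2}$, and you correctly identify this.

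Both approaches give $I(1/2,M,\alpha)\ll M^{4+\ve}$. The paper's version is more elementary in that it only needs the classical first- and second-derivative exponential-sum estimates and never passes to the dual sums $G_i$; yours, while slightly heavier in machinery, has the virtue of being structurally parallel to the inductive step in Lemma~\ref{mainlemma} (which does rely on Lemma~\ref{vdc}), so it integrates more naturally with the rest of the section.
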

\begin{proof}
Our task is to show that the integral in~\eqref{eq:itma} with $\kappa = 1/2$ is $O(M^{4+\ve}).$ Let $X_0 = \eta M$, where $\eta > 0$ is a small real number. We write this integral as
$$
I(1/2,M,\alpha) = \int_{0}^1 + \int_1^{X_0} + \int_{X_0}^{M^{3/2}} = I_1 + I_2 + I_3,
$$
say. We trivially have $I_1 \ll M^4$. For $1 \leq x \leq X_0$, by~\cite[Lemma 8.8]{IK04} we obtain the bound 
$$
\sum_{M \leq m \leq 2M}e\left(x\left(\frac{\phi_1(m)}{M}\right)^{\alpha}\right) \ll M/x,
$$ 
whence $I_2 \ll M^{4+\ve}.$ Therefore, it only remains to bound 
$$
I_3 = \int_{X_0}^{M^{3/2}}\bigg\vert\sum_{M \leq m \leq 2M}e\left(x\left(\frac{\phi_1(m)}{M^{\alpha}}\right)\right)\bigg\vert^2
\bigg\vert\sum_{M \leq m \leq 2M}e\left(x\left(\frac{\phi_2(m)}{M^{\alpha}}\right)\right)\bigg\vert^2 \, dx.
$$
We have
\begin{align*}
I_3 &\leq \left(\max_{X_0 \leq x \leq M^{3/2}}\bigg\vert\sum_{M \leq m \leq 2M}e\left(x\left(\frac{\phi_1(m)}{M^{\alpha}}\right)\right)\bigg\vert^2\right)\times \\
&\quad \quad \int_0^{M^{3/2}}\bigg\vert\sum_{M \leq m \leq 2M}e\left(x\left(\frac{\phi_2(m)}{M^{\alpha}}\right)\right)\bigg\vert^2 \, dx.
\end{align*}
Using van der Corput's inequality~\cite[Corollary 8.13]{IK04}, we have
$$
\sum_{M \leq m \leq 2M}e\left(x\left(\frac{\phi_1(m)}{M^{\alpha}}\right)\right) \ll (xM^{-2})^{1/2}M + (xM^{-2})^{-1/2},
$$
which is bounded by $M^{3/4}$ since $X_0 \leq x \leq M^{3/2}$. We have by Lemma~\ref{ulbound} that the second factor is 
$$
\leq M^{3/2}\#\left\{m_1,m_2 \sim M : |\phi_2(m_1)-\phi_2(m_2)| \leq M^{-3/2}\right\} \ll M^{5/2}.
$$
This completes the proof of the lemma.
\end{proof}
Next, we prove the following recurrence relation, which is the key step in the proof of Theorem~\ref{prop1}. 
\begin{lemma}\label{mainlemma}
Suppose that Let $0 < \kappa \leq 1/2$. Suppose that the hypothesis $\mathcal{H}(\alpha, \kappa)$ is satisfied. Then so is the hypothesis $\mathcal{H}\left(\alpha,\frac{\kappa}{1+\kappa}\right).$
\end{lemma}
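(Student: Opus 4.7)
The plan is to follow the scheme of Robert--Sargos~\cite[\S 4]{RS06}. Since $\overline{\overline{\alpha}}=\alpha$, the hypothesis $\mathcal{H}(\alpha,\kappa)$ is symmetric in $\alpha$ and $\overline{\alpha}$, so it suffices to prove $I(\kappa',M,\alpha)\ll M^{4+\ve}$ with $\kappa' = \kappa/(1+\kappa)$; the companion bound $I(\kappa',M,\overline{\alpha})\ll M^{4+\ve}$ follows by swapping the roles of $\alpha$ and $\overline{\alpha}$ throughout and invoking the other half of $\mathcal{H}(\alpha,\kappa)$.

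First I would dyadically decompose the outer integration range $[0,M^{2-\kappa'}]$ into blocks $x\asymp X$. The small-$x$ regime $X\ll M$ is handled by trivial bounds together with van der Corput first-derivative estimates, essentially as in the proof that $\mathcal{H}(\alpha,1/2)$ holds; the total contribution is absorbed into $M^{4+\ve}$. For each remaining block with $M\ll X\ll M^{2-\kappa'}$, apply the $B$-process (Lemma~\ref{vdc}) to each of the two sums in the integrand to obtain
$$
|E_i(x)|^2 \ll \frac{M^2}{X}\bigl(|S_i^*|^*\bigr)^2 + M,
$$
where $S_i^*(\tau)=\sum_{L\leq l\leq L_1}e\bigl(\tau(l/L)^{\overline{\alpha}}+l\theta_i\bigr)$ is a sum of length $L\asymp X/M$ with $\tau=\tau(x)\asymp X$. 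Expanding the product $|E_1|^2|E_2|^2$ leaves a main term $(M^4/X^2)(|S_1^*|^*)^2(|S_2^*|^*)^2$ and cross terms of lower order.

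For the main term, change variables $x\mapsto\tau$ (the Jacobian is $\asymp 1$ in this range), then extend the $\tau$-integral from $[\tau(X),\tau(2X)]$ to $[0,2\tau(2X)]$. Lemma~\ref{clever} with $Y=L^{2-\kappa}$ converts the maxima $|S_i^*|^*$ into ordinary $L^2$-norms; the admissibility $Y\leq 2\tau(2X)\asymp LM$ is equivalent to $L^{1-\kappa}\ll M$, which holds since $L\leq M^{1-\kappa'}=M^{1/(1+\kappa)}$. The inductive hypothesis $\mathcal{H}(\overline{\alpha},\kappa)$ now yields
$$
\int_0^{L^{2-\kappa}}|S_1^*(\tau)|^2|S_2^*(\tau)|^2\,d\tau \;\ll\; L^{4+\ve}.
$$
Assembling the estimates, the block $[X,2X]$ contributes $\ll M^{2-\kappa+\ve}X^{1+\kappa}$, and summing over dyadic $X\leq M^{2-\kappa'}$ gives a total of $M^{(2-\kappa)+(2-\kappa')(1+\kappa)+\ve}$. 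Demanding this be $\ll M^{4+\ve}$ simplifies to $\kappa'\geq \kappa/(1+\kappa)$, with equality attained at the claimed value of $\kappa'$.

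The main technical obstacle is that the $B$-process produces exponential sums carrying a linear twist $e(l\theta_i)$ in place of the nested shift $(l+\theta_i)^{\overline{\alpha}}$ that enters the definition of $\mathcal{H}(\overline{\alpha},\kappa)$. One must therefore either reinterpret the sums $S_i^*$ in terms of new functions $\phi_i$ after the $B$-process, while verifying that Lemmas~\ref{ulbound}--\ref{clever} and the base case $\mathcal{H}(\alpha,1/2)$ all tolerate unit-modulus coefficients $|a_n|,|b_n|\leq 1$, or formulate $\mathcal{H}$ with linear twists built in from the outset. The remaining bookkeeping---the cross terms from the squared $B$-process identity and the small-$x$ regime $X\ll M$---is routine, since each cross contribution involves at most one factor $(|S^*|^*)^2$ and is controlled by a one-dimensional variant of the same recurrence.
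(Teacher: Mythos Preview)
Your overall architecture is right and matches the paper closely: dyadic decomposition in $X$, $B$-process via Lemma~\ref{vdc}, change of variable $x\mapsto\tau$, then Lemma~\ref{clever} with $Y=L^{2-\kappa}$, yielding a block bound $M^{2-\kappa+\ve}X^{1+\kappa}$. Two points of divergence are worth noting. First, the paper's dyadic split is simpler than yours: rather than treating $X\ll M$ separately by first-derivative bounds, it covers the whole range $[0,M^{2-\kappa}]$ at once by the hypothesis $\mathcal{H}(\alpha,\kappa)$ itself, and only the blocks with $M^{2-\kappa}\ll X\ll M^{2-\kappa'}$ require work.

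Second, and more importantly, the ``technical obstacle'' you flag in your final paragraph is precisely the heart of the matter, and you have not resolved it. The paper does \emph{not} appeal to the inductive hypothesis $\mathcal{H}(\overline{\alpha},\kappa)$ after the $B$-process. Instead, the linear twists $e(l\theta_i)$ produced by Lemma~\ref{vdc} are treated as the unimodular coefficients $a_l=e(l\theta_1)$, $b_l=e(l\theta_2)$ when Lemma~\ref{clever} is invoked. The proof of Lemma~\ref{clever} passes through the counting function $\mathcal{N}_{\phi_1,\phi_2}$ via \eqref{eq:lowerbound} and then back via \eqref{eq:upperbound}; since \eqref{eq:upperbound} involves no coefficients, the $a_l,b_l$ are simply discarded on the right-hand side. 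What remains is the \emph{unshifted} fourth-power integral
\[
\int_0^{L^{2-\kappa}}\bigg\vert\sum_{L\le l\le 2L} e\bigl(y(l/L)^{\overline{\alpha}}\bigr)\bigg\vert^4\,dy,
\]
which is bounded by $L^{4+\ve}$ via the original Robert--Sargos result \cite[Lemma~7]{RS06}, not by any form of the hypothesis $\mathcal{H}$. Thus neither of the two fixes you propose (redefining $\phi_i$ post-$B$-process, or building linear twists into $\mathcal{H}$) is needed: the mechanism by which the shifts disappear is already built into Lemma~\ref{clever}.
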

\begin{proof}
Let $\kappa_1 = \frac{\kappa}{1+\kappa}$. It will suffice to show that $I(\kappa_1,M,\alpha) \ll M^{4+\ve}$.
Consider the integral
\begin{equation*}
J(X) = \int_X^{2X}\bigg\vert\sum_{n = M+1}^{2M}e\left(\frac{x\phi_1(n)}{M^{\alpha}}\right)\bigg\vert^2\bigg\vert\sum_{n = M+1}^{2M}e\left(\frac{x\phi_2(n)}{M^{\alpha}}\right)\bigg\vert^2\, dx.
\end{equation*}
By a standard dyadic argument, we have
$$
\int_0^{M^{2-\kappa_1}} \ll \int_0^{M^{2-\kappa}} + (\log M)\max_{M^{2-\kappa} \ll X \ll M^{2-\kappa_1}} \int_X^{2X}.
$$ 
Thus by the recurrence hypothesis, we have 
$$
I(\kappa_1,M,\alpha) \ll M^{4+\ve} + (\log M)\max_{M^{2-\kappa} \ll X \ll M^{2-\kappa_1}} J(X).
$$ 
Applying Lemma~\ref{vdc} and then making the substitution $y = \tau(x)$ in the ensuing integral, we get that 
\begin{equation*}
\begin{split}
J(X) &\ll \frac{M^4}{X^2}\int_0^{X_1} \left(\bigg\vert \sum_{L \leq l \leq L_1}e\left(y\left(\frac{l}{L}\right)^{\overline{\alpha}} + l\theta_1\right)\bigg\vert^*\right)^2\times \\
& \quad \quad \left(\bigg\vert \sum_{L \leq l \leq L_1}e\left(y\left(\frac{l}{L}\right)^{\overline{\alpha}} + l\theta_2\right)\bigg\vert^*\right)^2\, dx + M^4,
\end{split}
\end{equation*}
where $X_1 \asymp X.$ Applying Lemma~\ref{clever} with 
$$
a_l = \begin{cases} e(l\theta_1) &\mbox{ if $L \leq l \leq L_1$} 
\\ 0 &\mbox{ otherwise,}\end{cases}
$$ 
and similarly for $b_l,$ we get for any $Y \leq X_1$ that
\begin{equation*}
\begin{split}
J(X) &\ll \frac{M^4(\log M)^4}{XY}\int_0^Y \bigg\vert \sum_{L \leq l \leq 2L}e\left(y\left(\frac{l}{L}\right)^{\overline{\alpha}} \right)\bigg\vert^4 \, dx + M^4.
\end{split}
\end{equation*}
Choose $Y = L^{2-\kappa} \asymp (X/M)^{2-\kappa}$. Then we have by~\cite[Lemma 7]{RS06} that
\begin{equation*}
\int_0^Y \bigg\vert \sum_{L \leq l \leq 2L}e\left(y\left(\frac{l}{L}\right)^{\overline{\alpha}} \right)\bigg\vert^4 \, dy \ll L^{4+\ve}.
\end{equation*}
Therefore, we get
$$
J(X) \ll M^{2-\kappa+\ve}X^{1+\kappa} + M^4.
$$
Since $X \ll M^{2-\kappa_1}$, we get that 
$
J(X) \ll M^{4+\ve} 
$
and the lemma follows.
\end{proof}

\begin{lemma}\label{lemmam2}
Let $\alpha \neq 0,1$. Then for any integer $M\geq 2$ and any real number $\ve >0$ we have
$$\int_0^{M^2}\left(\bigg\vert\sum_{M < m \leq 2M}e\left(\frac{x\phi_1(m)}{M^{\alpha}}\right)\bigg\vert^*\right)^2 
\left(\bigg\vert\sum_{M < m \leq 2M}e\left(\frac{x\phi_2(m)}{M^{\alpha}}\right)\bigg\vert^*\right)^2 dx \ll M^{4+\ve}.$$
\end{lemma}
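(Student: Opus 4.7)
The plan is to combine the recurrence of Lemma~\ref{mainlemma} with the smoothing inequality of Lemma~\ref{clever}. The recurrence gives a statement about the integral without the star-norm over the range $[0, M^{2-\kappa}]$, while Lemma~\ref{lemmam2} asks for the star-norm version over the full range $[0, M^2]$; Lemma~\ref{clever} is the bridge between these two.

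First, I would iterate Lemma~\ref{mainlemma} starting from the base case $\mathcal{H}(\alpha,1/2)$. Setting $\kappa_0 = 1/2$ and $\kappa_{n+1} = \kappa_n/(1+\kappa_n)$, the identity $1/\kappa_{n+1} = 1 + 1/\kappa_n$ forces $\kappa_n = 1/(n+2)$, hence $\kappa_n \to 0$. A straightforward induction then shows that $\mathcal{H}(\alpha,\kappa_n)$ holds for every $n \geq 0$. Since $I(\kappa,M,\alpha)$ is monotone non-increasing in $\kappa$ in an evident sense (the range of integration shrinks), this yields $I(\kappa,M,\alpha) \ll_\ve M^{4+\ve}$ for every fixed $\kappa > 0$.

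Second, I would apply Lemma~\ref{clever} with $X = M^2$, $Y = M^{2-\kappa}$, and $a_m = b_m = 1$. Multiplying through by $X$, the conclusion of that lemma becomes
$$\int_0^{M^2}\!\Bigl(\Bigl|\sum_{M<m\leq 2M}e\!\Bigl(\tfrac{x\phi_1(m)}{M^{\alpha}}\Bigr)\Bigr|^*\Bigr)^2\Bigl(\Bigl|\sum_{M<m\leq 2M}e\!\Bigl(\tfrac{x\phi_2(m)}{M^{\alpha}}\Bigr)\Bigr|^*\Bigr)^2 dx \ll (\log M)^4\,M^{\kappa}\,I(\kappa,M,\alpha).$$
Combining this with the first step gives the bound $(\log M)^4 M^{4+\kappa+\ve_1}$. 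Given any target $\ve > 0$, I would pick $\kappa$ and $\ve_1$ sufficiently small so that $\kappa + \ve_1 < \ve/2$, which absorbs the $(\log M)^4$ factor and yields the desired $M^{4+\ve}$ bound.

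There is no real obstacle here: the two inputs are designed to fit together, and the only quantitative point is that the recurrence $\kappa \mapsto \kappa/(1+\kappa)$ has $0$ as its attractor, so after finitely many iterations one can reach any $\kappa > 0$. The mild cost one pays in Lemma~\ref{clever}, namely the factor $(\log M)^4 (X/Y) = (\log M)^4 M^\kappa$, is harmless because $\kappa$ may be taken as small as we please.
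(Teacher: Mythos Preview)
Your proof is correct and is essentially identical to the paper's: both fix a small parameter, apply Lemma~\ref{clever} with $X=M^2$ and $Y=M^{2-\kappa}$ to reduce the starred integral over $[0,M^2]$ to the unstarred integral $I(\kappa,M,\alpha)$ at the cost of $(\log M)^4 M^\kappa$, and then invoke the iteration of Lemma~\ref{mainlemma} (with $\kappa_n = 1/(n+2)\to 0$) to bound $I(\kappa,M,\alpha)\ll M^{4+\ve}$ for any fixed $\kappa>0$. The only cosmetic difference is the order of presentation.
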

\begin{proof}
Set 
$$
I_M = \int_0^{M^2}\left(\bigg\vert\sum_{M < m \leq 2M}e\left(\frac{x\phi_1(m)}{M^{\alpha}}\right)\bigg\vert^*\right)^2 
\left(\bigg\vert\sum_{M < m \leq 2M}e\left(\frac{x\phi_2(m)}{M^{\alpha}}\right)\bigg\vert^*\right)^2 \, dx
$$
Fix $\ve_0 > 0.$ Applying Lemma~\ref{clever} with $X= M^2$ and $Y = M^{2-\ve_0}$ we get
\begin{align*}
I_M \ll (\log M)^4M^{\ve_0}&\int_0^{M^{2-\ve_0}}\bigg\vert\sum_{M < m \leq 2M}e\left(\frac{x\phi_1(m)}{M^{\alpha}}\right)\bigg\vert^2\times \\
&\quad \quad \bigg\vert\sum_{M < m \leq 2M}e\left(\frac{x\phi_2(m)}{M^{\alpha}}\right)\bigg\vert^2\, dx.
\end{align*}
Set $\kappa_1 = 1/2$ and $\theta_l = \frac{\theta_{l-1}}{\theta_{l-1}+1} = \frac{1}{l+1}.$ We will choose $l$ such that $\frac{1}{l+1} < \ve_0$. Applying Lemma~\ref{mainlemma} $l$ times we find that
$
I_M \ll M^{4+\ve_0+\ve}
$
for any $\ve > 0$. This completes the proof.
\end{proof}

\subsection{Proof of Theorem~\ref{prop1}}
We must now show that 
$$
\mathcal{N}(M,\delta) \ll M^{2+\ve} + \delta M^{4+\ve}.
$$
Suppose first that $\delta = M^{-2}$. Then by Lemma~\ref{ulbound} we get
\begin{align*}
\mathcal{N}(M,M^{-2}) &\leq \pi^2M^{-2}\int_0^{M^2/2}\left(\bigg\vert\sum_{M < m \leq 2M}e\left(\frac{x\phi_1(m)}{M^{\alpha}}\right)\bigg\vert^*\right)^2 \times \\
&\quad \quad \quad\quad  \left(\bigg\vert\sum_{M < m \leq 2M}e\left(\frac{x\phi_2(m)}{M^{\alpha}}\right)\bigg\vert^*\right)^2 \, dx.
\end{align*}
Therefore we are through in this case by appealing to Lemma~\ref{lemmam2}.

If $\delta \leq M^{-2}$, then $\mathcal{N}(M,\delta) \leq \mathcal{N}(M,M^{-2}) \ll M^{2+\ve}$. Finally, if $\delta > M^{-2}$ we apply Lemmas~\ref{ulbound} and~\ref{lemmam2} once again to get
\begin{align*}
\mathcal{N}(M,\delta) &\ll \delta \int_0^{\delta^{-1}}\left(\bigg\vert\sum_{M < m \leq 2M}e\left(\frac{x\phi_1(m)}{M^{\alpha}}\right)\bigg\vert^*\right)^2 \times \\
&\quad \quad \quad \quad \quad \quad \left(\bigg\vert\sum_{M < m \leq 2M}e\left(\frac{x\phi_2(m)}{M^{\alpha}}\right)\bigg\vert^*\right)^2 \, dx.
\end{align*}
Thus we get
\begin{align*}
\mathcal{N}(M,\delta) &\ll \delta \int_0^{M^2}\left(\bigg\vert\sum_{M < m \leq 2M}e\left(\frac{x\phi_1(m)}{M^{\alpha}}\right)\bigg\vert^*\right)^2 \times \\
&\quad \quad \quad \quad \quad \quad \left(\bigg\vert\sum_{M < m \leq 2M}e\left(\frac{x\phi_2(m)}{M^{\alpha}}\right)\bigg\vert^*\right)^2 \, dx \\
&\ll \delta M^{4+\ve}.
\end{align*} 
This completes the proof of the theorem.


\begin{thebibliography}{10}

\bibitem{AM18}
Jayadev~S. Athreya and Gregory~A. Margulis, \emph{Values of random polynomials
  at integer points}, J. Mod. Dyn. \textbf{12} (2018), 9--16. \MR{3808207}

\bibitem{Baker67}
A.~Baker, \emph{On some diophantine inequalities involving primes}, J. Reine
  Angew. Math. \textbf{228} (1967), 166--181. \MR{217016}

\bibitem{BH82}
R.~C. Baker and G.~Harman, \emph{Diophantine approximation by prime numbers},
  J. London Math. Soc. (2) \textbf{25} (1982), no.~2, 201--215. \MR{653378}

\bibitem{BG21}
Prasuna Bandi and Anish Ghosh, \emph{On the density at integer points of a
  system comprising an inhomogeneous quadratic form and a linear form}, Math.
  Z. \textbf{299} (2021), no.~1-2, 781--796. \MR{4311618}

\bibitem{BD58}
B.~J. Birch and H.~Davenport, \emph{On a theorem of {D}avenport and
  {H}eilbronn}, Acta Math. \textbf{100} (1958), 259--279. \MR{98711}

\bibitem{Bourgain16}
Jean Bourgain, \emph{A quantitative {O}ppenheim theorem for generic diagonal
  quadratic forms}, Israel J. Math. \textbf{215} (2016), no.~1, 503--512.
  \MR{3551907}

\bibitem{BGS10}
Jean Bourgain, Alex Gamburd, and Peter Sarnak, \emph{Affine linear sieve,
  expanders, and sum-product}, Invent. Math. \textbf{179} (2010), no.~3,
  559--644. \MR{2587341}

\bibitem{TB03}
T.~D. Browning, \emph{Counting rational points on diagonal quadratic surfaces},
  Q. J. Math. \textbf{54} (2003), no.~1, 11--31. \MR{1967066}

\bibitem{GotzeMargulis2010}
Paul {Buterus}, Friedrich {G{\"o}tze}, Thomas {Hille}, and Gregory {Margulis},
  \emph{{Distribution of Values of Quadratic Forms at Integral Points}}, arXiv
  e-prints (2010), arXiv:1004.5123.

\bibitem{CH06}
R.~J. Cook and G.~Harman, \emph{The values of additive forms at prime
  arguments}, Rocky Mountain J. Math. \textbf{36} (2006), no.~4, 1153--1164.
  \MR{2274889}

\bibitem{BB06}
R.~de~la Bret\`eche and T.~D. Browning, \emph{Sums of arithmetic functions over
  values of binary forms}, Acta Arith. \textbf{125} (2006), no.~3, 291--304.
  \MR{2276196}

\bibitem{GGN20}
Anish Ghosh, Alexander Gorodnik, and Amos Nevo, \emph{Optimal density for
  values of generic polynomial maps}, Amer. J. Math. \textbf{142} (2020),
  no.~6, 1945--1979. \MR{4176550}

\bibitem{GH21}
Anish {Ghosh} and Jiyoung {Han}, \emph{{Values of Inhomogeneous Forms at
  {S}-integral points}}, arXiv e-prints (2021), arXiv:2106.15213.

\bibitem{GK18}
Anish Ghosh and Dubi Kelmer, \emph{A quantitative {O}ppenheim theorem for
  generic ternary quadratic forms}, J. Mod. Dyn. \textbf{12} (2018), 1--8.
  \MR{3808206}

\bibitem{GKY20-a}
Anish Ghosh, Dubi Kelmer, and Shucheng Yu, \emph{{Effective Density for
  Inhomogeneous Quadratic Forms I: Generic Forms and Fixed Shifts}},
  International Mathematics Research Notices (2020), 38 pp., rnaa206.

\bibitem{GKY20-b}
Anish {Ghosh}, Dubi {Kelmer}, and Shucheng {Yu}, \emph{{Effective density for
  inhomogeneous quadratic forms II: fixed forms and generic shifts}}, arXiv
  e-prints (2020), arXiv:2001.10990.

\bibitem{Hua38}
Loo-Keng Hua, \emph{Some results in the additive prime-number theory}, Quart.
  J. Math. Oxford Ser. (2) \textbf{9} (1938), no.~1, 68--80. \MR{3363459}

\bibitem{Huang20}
Jing-Jing Huang, \emph{The density of rational points near hypersurfaces}, Duke
  Math. J. \textbf{169} (2020), no.~11, 2045--2077. \MR{4132580}

\bibitem{Huxley72}
M.~N. Huxley, \emph{On the difference between consecutive primes}, Invent.
  Math. \textbf{15} (1972), 164--170. \MR{292774}

\bibitem{IK04}
H.~Iwaniec and E.~Kowalski, \emph{Analytic number theory}, American
  Mathematical Society Colloquium Publications, vol.~53, American Mathematical
  Society, Providence, RI, 2004. \MR{2061214}

\bibitem{Iwaniec77}
Henryk Iwaniec, \emph{On indefinite quadratic forms in four variables}, Acta
  Arith. \textbf{33} (1977), no.~3, 209--229. \MR{460235}

\bibitem{KelmerYu20}
Dubi Kelmer and Shucheng Yu, \emph{Values of random polynomials in shrinking
  targets}, Trans. Amer. Math. Soc. \textbf{373} (2020), no.~12, 8677--8695.
  \MR{4177272}

\bibitem{K18}
V.~Vinay Kumaraswamy, \emph{On correlations between class numbers of imaginary
  quadratic fields}, Acta Arith. \textbf{185} (2018), no.~3, 211--231.
  \MR{3858386}

\bibitem{LM14}
Elon Lindenstrauss and Gregory Margulis, \emph{Effective estimates on
  indefinite ternary forms}, Israel J. Math. \textbf{203} (2014), no.~1,
  445--499. \MR{3273448}

\bibitem{MaMo11}
Gregory Margulis and Amir Mohammadi, \emph{Quantitative version of the
  {O}ppenheim conjecture for inhomogeneous quadratic forms}, Duke Math. J.
  \textbf{158} (2011), no.~1, 121--160. \MR{2794370}

\bibitem{Marklof03-b}
Jens Marklof, \emph{Pair correlation densities of inhomogeneous quadratic
  forms. {II}}, Duke Math. J. \textbf{115} (2002), no.~3, 409--434.
  \MR{1940408}

\bibitem{Marklof03-a}
\bysame, \emph{Pair correlation densities of inhomogeneous quadratic forms},
  Ann. of Math. (2) \textbf{158} (2003), no.~2, 419--471. \MR{2018926}

\bibitem{Matomaki10}
Kaisa Matom\"{a}ki, \emph{Diophantine approximation by primes}, Glasg. Math. J.
  \textbf{52} (2010), no.~1, 87--106. \MR{2587819}

\bibitem{MV74}
H.~L. Montgomery and R.~C. Vaughan, \emph{Hilbert's inequality}, J. London
  Math. Soc. (2) \textbf{8} (1974), 73--82. \MR{337775}

\bibitem{Ramachandra73}
K.~Ramachandra, \emph{On the sums {$\sum _{j=1}^{K}\,\lambda
  _{j}\,f_{j}(p_{j})$}}, J. Reine Angew. Math. \textbf{262(263)} (1973),
  158--165. \MR{327660}

\bibitem{RS06}
O.~Robert and P.~Sargos, \emph{Three-dimensional exponential sums with
  monomials}, J. Reine Angew. Math. \textbf{591} (2006), 1--20. \MR{2212877}

\bibitem{Schindler20}
Damaris Schindler, \emph{Diophantine inequalities for generic ternary diagonal
  forms}, Int. Math. Res. Not. IMRN (2020), no.~11, 3396--3416. \MR{4123108}

\bibitem{StVi20}
Andreas Str\"{o}mbergsson and Pankaj Vishe, \emph{An effective equidistribution
  result for {$\rm SL(2, \Bbb R) \ltimes(\Bbb R^2)^{\oplus k}$} and application
  to inhomogeneous quadratic forms}, J. Lond. Math. Soc. (2) \textbf{102}
  (2020), no.~1, 143--204. \MR{4143730}

\bibitem{V74}
R.~C. Vaughan, \emph{Diophantine approximation by prime numbers. {II}}, Proc.
  London Math. Soc. (3) \textbf{28} (1974), 385--401. \MR{337813}

\bibitem{Vinogradov37}
I.~M. {Vinogradow}, \emph{{Representation of an odd number as a sum of three
  primes}}, {C. R. (Dokl.) Acad. Sci. URSS, n. Ser.} \textbf{15} (1937),
  169--172 (English).

\end{thebibliography}
\end{document}